\numberwithin{equation}{section}
\numberwithin{figure}{section}
\newcommand{\mfm}{\mathfrak{m}}
\theoremstyle{plain}
\newtheorem{theorem}{Theorem}[section]
\newtheorem{lemma}[theorem]{Lemma}
\newtheorem{corollary}[theorem]{Corollary}
\newtheorem{proposition}[theorem]{Proposition}
\theoremstyle{definition}
\theoremstyle{remark}
\newtheorem{remark}[theorem]{Remark}
\newcommand{\pos}{\mathop{\mathrm{pos}}\nolimits}
\newcommand{\sgn}{\mathop{\mathrm{sgn}}\nolimits}
\def\EE{\mathbb{E}}
\def\NN{\mathbb{N}}
\def\PP{\mathbb{P}}
\def\RR{\mathbb{R}}
\def\SS{\mathbb{S}}
\def\bP{\mathbf{P}}
\def\cA{\mathcal{A}}
\def\sN{\mathscr{N}}
\def\sN{\mathscr{N}}
\def\sV{\mathscr{V}}
\newcommand{\eqdistr}{\stackrel{d}{=}}
\newcommand{\ii}{{\rm{i}}}
\newcommand{\Mod}[1]{\ (\mathrm{mod}\ #1)}
\newcommand{\E}{\mathbb E}
\newcommand{\R}{\mathbb{R}}
\newcommand{\N}{\mathbb{N}}
\renewcommand{\P}{\mathbb{P}}
\def\dint{\textup{d}}
\def\SO{\textup{SO}}
\begin{document}

\title{The typical cell of a Voronoi tessellation on the sphere}

\author{Zakhar Kabluchko}
\address{Zakhar Kabluchko: Institut f\"ur Mathematische Stochastik,
Westf\"alische Wilhelms-Universit\"at M\"unster,
Orl\'eans-Ring 10,
48149 M\"unster, Germany}
\email{zakhar.kabluchko@uni-muenster.de}

\author{Christoph Th\"ale}
\address{Christoph Th\"ale: Fakult\"at f\"ur Mathematik,
Ruhr-Universit\"at Bochum,
44780 Bochum, Germany}
\email{christoph.thaele@rub.de}

\date{}

\begin{abstract}
\noindent  The typical cell of a Voronoi tessellation generated by $n+1$ uniformly distributed random points on the $d$-dimensional unit sphere $\mathbb S^d$ is studied. Its $f$-vector is identified in distribution with the $f$-vector of a beta' polytope generated by $n$ random points in $\mathbb R^d$. Explicit formulae for the expected $f$-vector are provided for any $d$ and the low-dimensional cases $d\in\{2,3,4\}$ are studied separately. {This implies an explicit formula for the total number of $k$-dimensional faces in the spherical Voronoi tessellation as well.}
\end{abstract}

\keywords{Beta polytope, beta' polytope, spherical stochastic geometry, typical cell, Voronoi tessellation}

\subjclass[2010]{Primary  60D05; Secondary 52A22, 52B05.}

\maketitle



\section{Introduction}

Let $E$ be a metric space and $\{x_i:i\in I\}$ a finite (or, more generally, locally finite) collection of points in $E$, where $I$ is some index set. The \textit{Voronoi cell} of a point $x_i$ is the set of all points in $E$ whose distance to $x_i$ is not greater than the distance to any other point $x_j$ with $i\neq j$. The \textit{Voronoi tessellation} or Voronoi diagram associated with the set $\{x_i:i\in I\}$ is then just the collection of all such Voronoi cells. The study of Voronoi tessellations has attracted a lot of attention in computational as well as in stochastic geometry. To a great extent this is because of their various applications ranging from the modelling of biological tissues or polycrystalline microstructures in metallic alloys to classification problems in machine learning. We refer the reader to the monographs \cite{DeBerg,Mitchell,MollerVoronoi,Okabe} for details and many more references.

In this note we consider Voronoi tessellations of the unit sphere that are generated by a (finite) collection of uniformly distributed, independent random points. Unlike their Euclidean counterparts, for which there exists an extensive literature (see \cite{MollerVoronoi,Okabe,SW,SKM} and the references cited therein), the mathematical properties of spherical Voronoi tessellations are only poorly understood. Just a few results for Voronoi tessellation on the $2$-dimensional unit sphere are available in the classical reference \cite{MilesSphere}. On the other hand, Voronoi tessellations induced by points on a general manifold become increasingly important in computational geometry, see \cite{ChengBOOK,DeBerg}. Our goal is to partially fill the resulting gap by considering the  combinatorial structure of what is called the typical cell of a Voronoi tessellation on the $d$-dimensional unit sphere for general $d\geq 2$. More precisely, we shall study the $f$-vector of the typical spherical Voronoi cell. We do this by establishing and exploiting a new connection of such typical Voronoi cells with the classes of random beta and beta' polytopes. These have recently been under intensive investigation~\cite{Bonnet1,Bonnet2,Chasapis,GroteKabluchkoThaele,kabluchko_angles,kabluchko_formula,kabluchko_poisson_zero,KabluchkoMarynychTemesThaele,KabluchkoTemesvariThaele,beta_polytopes}. In fact, as it will turn out, the $f$-vector of the typical spherical Voronoi cell can be identified in distribution with the $f$-vector of (the dual of) a particular random beta' polytope. Also the explicit expected values can be determined from this distributional identity and the known results for beta' polytopes. We establish in addition a link between the expected $f$-vector of typical spherical Voronoi cells and that of a special beta polytope. Of special interest are the low-dimensional cases $d\in\{2,3,4\}$ which will be examined separately.

We would like to point out that our paper continues a recent line of research in stochastic geometry which focuses on the study of non-Euclidean geometric random structures. As examples we mention the studies of random convex hulls in spherical convex bodies or on half-spheres \cite{BaranyHugReitznerSchneider,BesauThaele,KabluchkoMarynychTemesThaele,kabluchko_poisson_zero}, the results on random tessellations by great hyperspheres \cite{ArbeiterZaehleMosaics,HugReichenbacher,HugSchneider2016,MilesSphere}, the central and non-central limit theorems for Poisson hyperplanes in hyperbolic spaces \cite{HeroldHug}, the papers \cite{DeussHoerrmannThaele,HugThaele} on splitting tessellations on the sphere, the asymptotic investigation of Voronoi tessellations on general Riemannian manifolds \cite{Calkaetc} and the general limit theory for stabilizing functionals of point processes in manifolds \cite{PenroseYukichMf}.

\begin{figure}[t]\label{fig:tess}
\begin{center}
\includegraphics[trim = 20mm 20mm 20mm 20mm, clip, width=0.4\columnwidth]{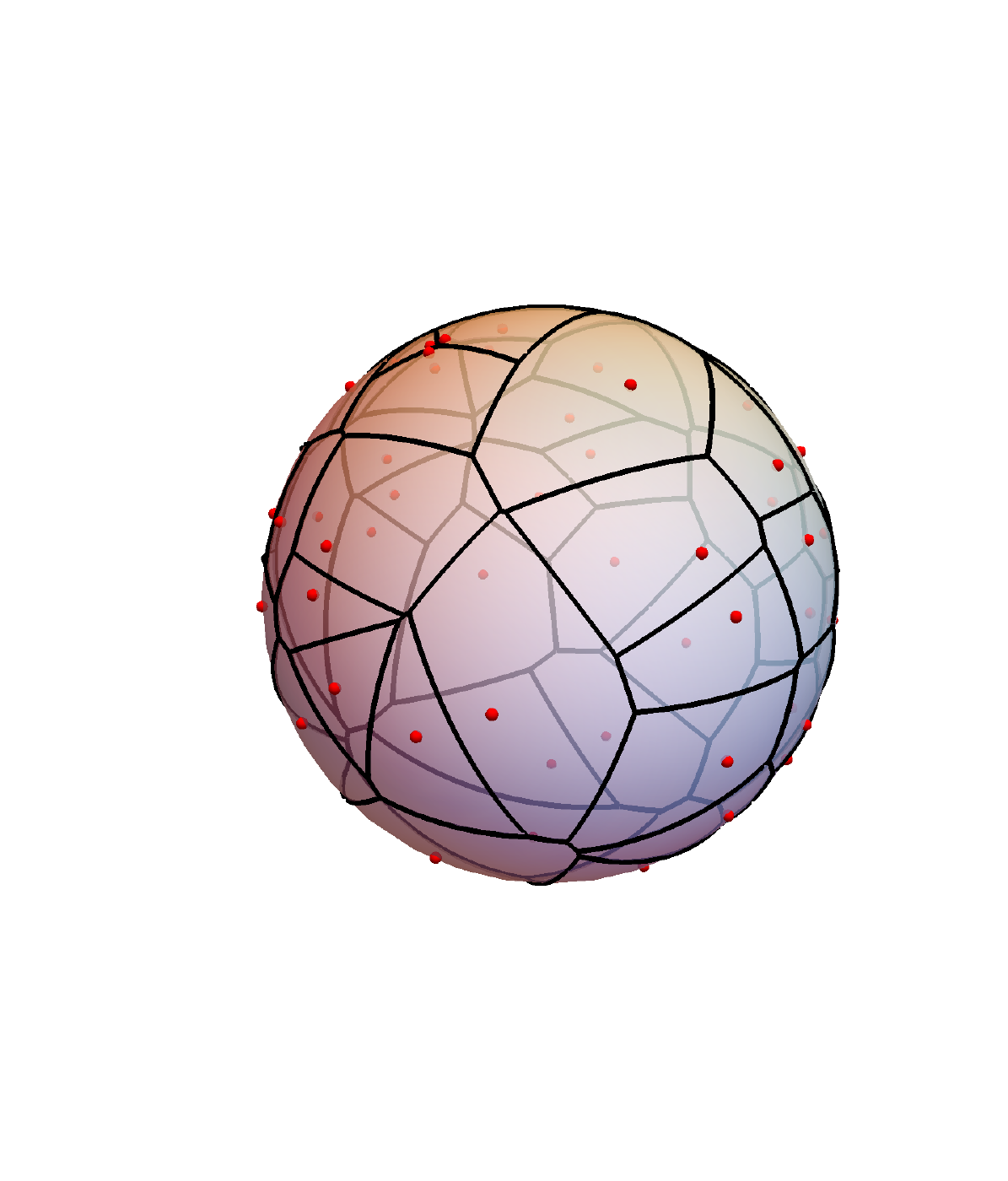}\quad
\includegraphics[trim = 20mm 24mm 20mm 20mm, clip, width=0.4\columnwidth]{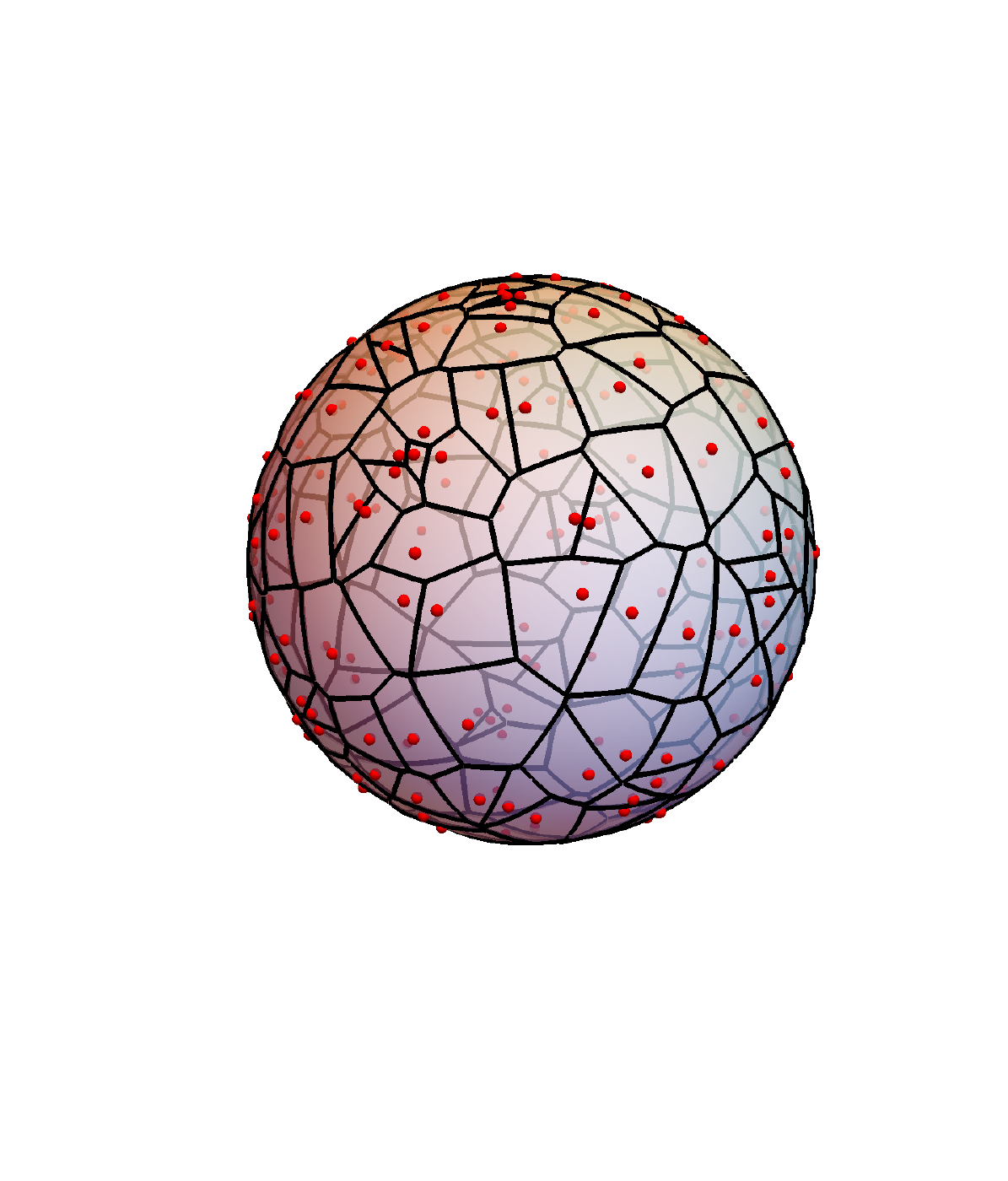}
\end{center}
\caption{Simulations of spherical Voronoi tessellations on $\SS^2$ with  $50$ cells (left) and $200$ cells (right).}
\end{figure}

\section{The typical Voronoi cell and its \texorpdfstring{$f$}{f}-vector}

\subsection{The typical Voronoi cell}\label{subsec:TypicalVoronoiCell}

We are now going to introduce our framework. Let $\SS^d$ be the $d$-dimensional unit sphere, which we think of being embedded in $\RR^{d+1}$ in such a way that it is centred at the origin of $\RR^{d+1}$. A {generic} point in $\R^{d+1}$ is denoted by $x = (x_0,x_1,\ldots,x_d)$. The dimension of the sphere, $d\in\N$,  is fixed once and for all. The normalized spherical Lebesgue measure on $\SS^d$ is denoted by $\sigma_d$. Let $X_1,\ldots,X_n$ be $n\in\NN$ independent random points sampled on $\SS^d$ according to $\sigma_d$ and defined over some underlying probability space $(\Omega,\cA,\PP)$.  The \textit{binomial process} $\xi_{n}:=\{X_1,\ldots,X_n\}$ is the point process on $\SS^d$ with atoms at $X_1,\ldots,X_n$.  We can now construct the \textit{spherical Voronoi tessellation} based on $\xi_{n}$ as follows. If $\rho(\,\cdot\,,\,\cdot\,)$ denotes the geodesic distance on $\SS^d$, we let $C_{i,n}$ be the \textit{Voronoi cell} of a point $X_i\in\xi_{n}$, that is,
$$
C_{i,n} := \{z\in\SS^d:\rho(X_i,z)\leq \rho(X_j,z)\text{ for all } j\in\{1,\ldots,n\}\},
\qquad
i\in \{1,\ldots,n\}.
$$
As in the Euclidean case (see \cite[Chapter 10]{SW}), one shows that the sets $C_{1,n},\ldots,C_{n,n}$ are in fact spherical polytopes covering $\SS^d$ and having disjoint interiors. Here, we recall that a spherical polytope is defined as an intersection of $\SS^d$ and a polyhedral convex cone and that the latter is defined as an intersection of finitely many {closed} half-spaces whose bounding hyperplanes contain the origin.
The collection $\{C_{1,n},\ldots,C_{n,n}\}$ of all Voronoi cells of points of $\xi_{n}$ is what we call the \textit{spherical Voronoi tessellation} $\mfm_{n,d}$, see Figure~\ref{fig:tess} for two sample realizations.

In this note we are interested in the \textit{typical cell} of such a spherical Voronoi tessellation. 
Roughly speaking, the typical cell arises by picking one of the cells $C_{i,n}$ uniformly at random and rotating it such that its ``{centre}'' $X_i$ becomes the north pole $e:=(1,0,\ldots,0)$ of $\SS^d$.
To make this  precise, let {$N=N_{n}$} be a random variable with uniform distribution on the set $\{1,\ldots,n\}$ and assume that  {$N$} is independent of the binomial process $\xi_{n}$.
Also, for every point $v\in \SS^d$ we fix some orthogonal transformation $O_v:\R^{d+1} \to\R^{d+1}$ such that $O_v v = e$ and assume that the matrix elements of $O_v$ are Borel functions of $v$. Then, the typical cell of the Voronoi tessellation $\mfm_{n, d}$ is a random spherical polytope $\sV_{n,d}$  defined by
\begin{equation}\label{eq:def_typical_voronoi}
 {\sV_{n,d} := O_{X_N} C_{N,n}.}
\end{equation}
Since $X_1,\ldots,X_n$ are exchangeable, the tuple  {$(\xi_n, X_N)$} has the same joint law as $(\xi_n,X_1)$ and we arrive at the following distributional equality:
$$
\sV_{n,d} \eqdistr  O_{X_1} C_{1,n}.
$$

In the following, it will be more convenient to consider   {a binomial process} with $n+1$ rather than with $n$ points. The next proposition states that the typical Voronoi cell $\sV_{n+1,d}$ of the binomial process $\xi_{n+1}$ has the same distribution as the Voronoi cell of the north pole $e$ in the point process $\xi_n\cup\{e\}$. Note that it also proves that the distribution of the typical cell does not depend on the choice of the family of orthogonal transformations $(O_v)_{v\in\SS^d}$.
\begin{proposition}\label{prop:typical_voronoi_rep}
We have the distributional equality
\begin{equation}\label{eq:def_typical_voronoi2}
\sV_{n+1,d} \eqdistr  \{z\in\SS^d: \rho(e,z)\leq \rho(X_j,z) \text{ for all } j\in\{1,\ldots,n\}\}.
\end{equation}
\end{proposition}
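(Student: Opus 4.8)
The plan is to prove the distributional equality by unpacking the definition of the typical cell and using the exchangeability and rotation-invariance of the binomial process. Starting from the definition $\sV_{n+1,d} = O_{X_N} C_{N,n+1}$ with $N$ uniform on $\{1,\dots,n+1\}$ and independent of $\xi_{n+1}=\{X_1,\dots,X_{n+1}\}$, I would first use exchangeability of $X_1,\dots,X_{n+1}$ to reduce to $N=1$, so that $\sV_{n+1,d}\eqdistr O_{X_1}C_{1,n+1}$, where
$$
C_{1,n+1}=\{z\in\SS^d:\rho(X_1,z)\leq\rho(X_j,z)\text{ for all }j\in\{2,\dots,n+1\}\}.
$$

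Next I would apply the orthogonal transformation $O_{X_1}$ to this cell. Since $O_{X_1}$ is an isometry of $\SS^d$ preserving the geodesic distance $\rho$, and $O_{X_1}X_1=e$ by definition of the family $(O_v)_v$, a point $z$ lies in $O_{X_1}C_{1,n+1}$ if and only if $O_{X_1}^{-1}z\in C_{1,n+1}$, i.e.
$$
\rho(X_1,O_{X_1}^{-1}z)\leq\rho(X_j,O_{X_1}^{-1}z)\quad\Longleftrightarrow\quad \rho(e,z)\leq\rho(O_{X_1}X_j,z)
$$
for all $j\in\{2,\dots,n+1\}$. Thus $O_{X_1}C_{1,n+1}$ is the Voronoi cell of $e$ in the configuration $\{e\}\cup\{O_{X_1}X_j:j=2,\dots,n+1\}$.

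The main step is then to identify the distribution of the rotated points $O_{X_1}X_2,\dots,O_{X_1}X_{n+1}$. The key observation is that $X_2,\dots,X_{n+1}$ are i.i.d.\ uniform on $\SS^d$ and independent of $X_1$ (hence of $O_{X_1}$). Conditioning on $X_1$, the rotation $O_{X_1}$ is a fixed orthogonal map, and since the uniform measure $\sigma_d$ is invariant under orthogonal transformations, each $O_{X_1}X_j$ is again uniform on $\SS^d$; moreover the $O_{X_1}X_j$ remain mutually independent given $X_1$, and their conditional law does not depend on the value of $X_1$. Therefore $(O_{X_1}X_2,\dots,O_{X_1}X_{n+1})$ is, unconditionally, a vector of $n$ i.i.d.\ uniform points on $\SS^d$, which we may relabel as $X_1,\dots,X_n$ to match the right-hand side of \eqref{eq:def_typical_voronoi2}.

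The only point requiring care is the measurability and the fact that the resulting law is genuinely independent of the choice of the family $(O_v)_{v\in\SS^d}$: this is exactly where the assumption that the matrix entries of $O_v$ are Borel functions of $v$ is used, guaranteeing that $(X_1,O_{X_1}X_2,\dots,O_{X_1}X_{n+1})$ is a well-defined random element, and the rotation-invariance argument above shows the final distribution is the same for any admissible choice. I expect the conditioning argument that the $O_{X_1}X_j$ are i.i.d.\ uniform and independent of $X_1$ to be the crux; everything else is a direct translation of the Voronoi condition through the isometry $O_{X_1}$.
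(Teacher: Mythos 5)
Your proof is correct and follows essentially the same route as the paper: reduce to $N=1$ by exchangeability, translate the Voronoi inequality through the isometry $O_{X_1}$ using $O_{X_1}X_1=e$, then condition on $X_1=v$ and invoke rotation invariance of $\sigma_d$ to identify $(O_vX_2,\ldots,O_vX_{n+1})$ with $n$ i.i.d.\ uniform points, the conditional law being independent of $v$. The paper merely writes this conditioning as an explicit disintegration $\P[\sV_{n+1,d}\in B]=\int_{\SS^d}\P[O_{X_1}C_{1,n+1}\in B\mid X_1=v]\,\sigma_d(\dint v)$ with the substitution $y=O_vz$, but the content is identical.
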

\begin{proof}
 Conditioning on $X_1=v$ and integrating over all $v\in\SS^d$, we can write the distribution of $\sV_{n+1,d}$ as follows:
\begin{align*}
\P[\sV_{n+1,d} \in B] = \int_{\SS^d}\P[O_{X_1} C_{1,n+1} \in B | X_1 = v]\, \sigma_d(\dint v),
\end{align*}
for every Borel set $B$ in the space of compact subsets of $\SS^d$ endowed with the usual Hausdorff distance. Recalling the definition of $C_{1,n+1}$, we can write
\begin{align*}
\P[O_{X_1} C_{1,n+1} \in B | X_1 = v]
&=
\P\left[O_{v} \left\{z\in\SS^d:\rho(v,z)\leq \min_{j=2,\ldots,n+1}\rho(X_j,z)\right\} \in B\right]\\
&=
\P\left[\left\{y\in\SS^d:\rho(v,O_v^{-1} y)\leq \min_{j=2,\ldots,n+1}\rho(X_j,O_v^{-1} y)\right\} \in B\right]\\
&=
\P\left[\left\{y\in\SS^d:\rho(e,y) \leq \min_{j=2,\ldots,n+1}\rho(O_v X_j,y)\right\} \in B\right]\\
&=
\P\left[\left\{y\in\SS^d:\rho(e,y) \leq \min_{j=1,\ldots,n}\rho(X_j,y)\right\} \in B\right],
\end{align*}
where we defined $y:= O_v z$ and used that $(O_v X_2,\ldots,O_v X_{n+1})$ has the same  {joint} law as $(X_1,\ldots,X_{n})$. Since the right-hand side does not depend on $v\in\SS^d$, we arrive at
$$
\P[\sV_{n+1,d} \in B]   = \P\left[\left\{y\in\SS^d:\rho(e,y) \leq \min_{j=1,\ldots,n}\rho(X_j,y)\right\} \in B\right],
$$
which completes the proof.
\end{proof}

For stationary tessellations in the Euclidean space $\R^d$, where the number of cells is almost surely infinite, one usually defines the typical cell using the concept of Palm distribution, which is a common device in stochastic geometry~\cite{SW}. The Palm approach can be applied on the sphere, too.   {Following \cite{RotherZaehle}, the Palm distribution $\bP_{\xi_{n+1}}^e$ of the binomial process $\xi_{n+1}$ with respect to a fixed point on the sphere (which we choose to be the north pole $e$) can formally be defined as follows. For $v\in\SS^d$ we let  $\Theta_v$ denote the set of all orientation-preserving orthogonal transformations $O:\RR^{d+1}\to\RR^{d+1}$ such that $Ov=e$. Note that $\Theta_e$ is a group which can be identified with $\text{SO}(d)$.  By $\nu_e$ we denote the unique Haar probability measure on $\Theta_e$ and define the image measure $\nu_v(A) := \nu_e (\{O O_v^{-1}: O\in A\})$, $A\subset \Theta_v$, on $\Theta_v$, where $O_v\in\Theta_v$ is arbitrary (in fact, the definition is independent of the choice of $O_v$, see~\cite{RotherZaehle}). The \textit{Palm distribution} $\bP_{\xi_{n+1}}^e$ with respect to the point $e$ is given by
\begin{align*}
	\bP_{\xi_{n+1}}^e(\,\cdot\,)
	&:=\frac{1}{n+1}\E\sum_{v\in\xi_{n+1}}\int_{\Theta_v}{\bf 1}(O^{-1} \xi_{n+1}\in\,\cdot\,)\,\nu_v(\dint O).
\end{align*}
From \cite[Lemma 6.14]{Kallenberg2} it is known that
$$
\bP_{\xi_{n+1}}^e(\,\cdot\,) = \bP_{\xi_n}(\xi_{n}\cup\{e\}\in\,\cdot\,),
$$
where $\bP_{\xi_{n}}$ denotes the distribution of the binomial process $\xi_{n}$. This is the analogue for binomial processes of the celebrated Slivnyak-Mecke theorem for Poisson processes \cite[Lemma 6.15]{Kallenberg2}.
In particular, it shows that the definition of the typical cell given above coincides with the definition based on the Palm approach.}

\subsection{Total number of faces}
Our goal is to describe the $f$-vector of the typical Voronoi cell $\sV_{n,d}$. More precisely, consider a  spherical polytope $P\subset\SS^d$ represented as an intersection of $\SS^d$ and a polyhedral convex cone $C$. The $k$-dimensional faces of $P$ are defined as intersections of $(k+1)$-dimensional faces of $C$ with $\SS^d$, where $k\in\{0,1,\ldots,d\}$. We denote by $\mathcal F_k(P)$ the set of $k$-dimensional faces of $P$ and by $f_k(P):= |\mathcal F_k(P)|$ their number.\footnote{If $P$ is degenerate (that is, if it contains a pair of diametrally opposite points), the above definitions may lead to results which look unnatural. For example, if $C$ is a half-plane, $d=1$, and $P$ is a semicircle, then $C$ has $1$ one-dimensional face and hence $f_0(P) = 1$ (rather than $2$, which seems more natural). In the following, the reader may assume that {$n\geq d+2$,} wh1ich implies that the typical Voronoi cell $\sV_{n,d}$ is non-degenerate  and these difficulties disappear. 
{  
Another possibility is to consider conical tessellations instead of the spherical ones.
}
}
Here, $|A|$ stands for the number of elements of a set $A$. The $d$-dimensional vector $\big(f_0(P),f_1(P),\ldots,f_{d-1}(P)\big)$ is called the \textit{$f$-vector} of $P$.

Before stating the results on the expected $f$-vector of the typical Voronoi cell, let us point out its connection to another natural quantity. The total number of $k$-dimensional faces of the tessellation $\mfm_{n, d}$ is denoted by
$$
f_k(\mfm_{n, d}) := \Bigg|\bigcup_{i=1}^n \mathcal F_k(C_{i,n})\Bigg|, \qquad k\in \{0,1,\ldots,d\}.
$$
Note that even if some face $F$ belongs to more than one cell $C_{i,n}$, it is counted only once in the above definition.
\begin{proposition}\label{prop:total_cells}
For all  {$n\geq d+1$} and $k\in \{0,\ldots,d\}$, we have
$$
\EE f_k (\mfm_{n, d}) = \frac{n}{d-k+1}\,\EE f_k(\sV_{n,d}).
$$
\end{proposition}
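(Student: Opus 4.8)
The plan is to combine a double-counting of cell--face incidences with the exchangeability of the generators $X_1,\dots,X_n$. The combinatorial heart of the argument is the observation that, almost surely, the tessellation $\mfm_{n,d}$ is \emph{normal} (face-to-face) and every $k$-dimensional face is shared by exactly $d-k+1$ of the cells. Indeed, the bisector of two generators $X_i$ and $X_j$ is the intersection of $\SS^d$ with the linear hyperplane $\{z:\langle X_i-X_j,z\rangle=0\}$, so a point in the relative interior of a $k$-face is equidistant to some collection of generators while strictly closer to them than to the remaining ones. The equidistance locus of $m$ generators imposes $m-1$ bisector conditions and is hence generically $(d-m+1)$-dimensional, so a $k$-face arises precisely from $m=d-k+1$ equidistant generators and is therefore a common face of exactly that many cells.

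Granting this, I would first record the almost sure identity
\begin{equation*}
\sum_{i=1}^n f_k(C_{i,n}) = (d-k+1)\, f_k(\mfm_{n,d}),
\end{equation*}
obtained by counting incident pairs $(F,i)$ with $F\in\mathcal F_k(C_{i,n})$ in two ways: summing over cells gives the left-hand side, while summing over the distinct faces $F$ of the tessellation, each contributing its multiplicity $d-k+1$, gives the right-hand side. Taking expectations and using that $X_1,\dots,X_n$ are exchangeable---so that all summands $f_k(C_{i,n})$ share a common expectation---yields $n\,\EE f_k(C_{1,n}) = (d-k+1)\,\EE f_k(\mfm_{n,d})$. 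Finally, orthogonal maps preserve the full combinatorial type of a spherical polytope, so $f_k(O_{X_1}C_{1,n}) = f_k(C_{1,n})$ pointwise; combined with the distributional identity $\sV_{n,d}\eqdistr O_{X_1}C_{1,n}$ recorded just before Proposition~\ref{prop:typical_voronoi_rep}, this gives $\EE f_k(\sV_{n,d}) = \EE f_k(C_{1,n})$, and the claimed formula follows upon rearranging.

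The step I expect to require the most care is the general-position claim underlying the multiplicity $d-k+1$. The point is to show that the exceptional configurations---those for which some $k$-face is shared by more than $d-k+1$ cells, equivalently for which $d+2$ generators become equidistant from a common point of $\SS^d$ (i.e.\ lie on a common $(d-1)$-dimensional subsphere)---form a set of $\sigma_d^{\otimes n}$-measure zero. Since each such coincidence imposes at least one nontrivial analytic constraint on the coordinates of the generators and $\sigma_d$ is absolutely continuous with respect to surface measure, a standard Fubini and measure-zero argument disposes of them; one must also check that this genericity forces the tessellation to be face-to-face, so that the faces of adjacent cells genuinely coincide. All the $f$-vectors here are deterministically bounded in terms of $n$, so the interchange of expectation with the finite sum and the finiteness of all expectations are automatic, while the hypothesis $n\geq d+1$ guarantees that $k$-faces of the required type actually occur.
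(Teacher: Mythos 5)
Your proof is correct and follows essentially the same route as the paper's: a double count of cell--face incidences, the almost sure normality of the tessellation (each $k$-face lying in exactly $d-k+1$ cells), and rotation invariance of the $f$-vector to identify $\EE f_k(\sV_{n,d})$ with the common value $\EE f_k(C_{1,n})$. The only cosmetic differences are that the paper reaches this average through the uniform random index $N$ in the definition of the typical cell rather than invoking exchangeability explicitly, and it cites the Euclidean analogue (Theorem~10.2.3 in \cite{SW}) for normality where you sketch the general-position argument.
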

\begin{proof}
We use a double-counting argument. Let  {$M := \sum_{i=1}^{n} f_k(C_{i,n})$} be the number of pairs $(C_{i,n},F)$, where $C_{i,n}$ is a cell of the tessellation $\mfm_{n,d}$, and $F\subset C_{i,n}$ a $k$-dimensional face of $C_{i,n}$. On the one hand, the above definition~\eqref{eq:def_typical_voronoi} of the typical cell implies that
$$
\EE f_k(\sV_{n,d})
=
\EE f_k(O_{X_N} C_{N,n})
=
\EE f_k(C_{N,n})
=
\frac{1}{n} \sum_{i=1}^{n} \EE f_k(C_{i,n})
=
\frac{1}{n} \EE \sum_{i=1}^{n} f_k(C_{i,n})
=
\frac{\E M}{n}.
$$
On the other hand, the spherical Voronoi tessellation is normal, that is, every $k$-dimensional face belongs to $(d-k+1)$ cells of dimension $d$, with probability one (cf.\ Theorem~10.2.3 in~\cite{SW} for a similar statement in the Euclidean case).  It follows that almost surely
$$
M = (d-k+1) f_k(\mfm_{n,d}).
$$
By taking the expectations and comparing both identities, we arrive at the claim.
\end{proof}

\subsection{Reduction to beta' polytopes}

As anticipated above, our goal will be to identify the expected $f$-vector of the typical Voronoi cell $\sV_{n+1,d}$ generated by $n+1$ uniformly distributed random points on the $d$-dimensional unit sphere. We do this first in terms of the $f$-vector of random beta' polytopes, a notion we are going to explain next. For $\beta>d/2$ we define the probability density $\tilde f_{d, \beta}$ on $\R^d$ by
\begin{equation}\label{eq:BetaPrimeDensity}
\tilde f_{d, \beta}(x) := \tilde c_{d,\beta}\,(1+\|x\|^2)^{-\beta},\qquad\qquad
\tilde c_{d,\beta}={\Gamma(\beta)\over \pi^{d/2}\Gamma(\beta-d/2)},
\end{equation}
where $\|\,\cdot\,\|$ denotes the Euclidean norm in $\RR^d$. We let $\tilde P_{n,d}^\beta:={\rm conv}(\tilde X_1,\ldots,\tilde X_n)$ be the convex hull of $n\in\NN$ independent random points $\tilde X_1,\ldots,\tilde X_n$ distributed in $\RR^d$ according to the density $\tilde f_{d,\beta}$. This random polytope is known as a so-called \emph{beta' polytope}. In our notation we follow~\cite{kabluchko_formula,KabluchkoTemesvariThaele,beta_polytopes}, where these polytopes were studied.  As in the spherical case, we denote by $(f_0(P),f_1(P),\ldots,f_{d-1}(P))$ the $f$-vector of a polytope $P\subset\RR^d$, where $f_k(P)$, $k\in\{0,1,\ldots,d\}$, is the number of $k$-dimensional faces of $P$.
Our main result relates the $f$-vector of $\sV_{n+1,d}$ to that of $\tilde P_{n,d}^\beta$ with $\beta=d$  and can be formulated as follows. The proof is postponed to Section~\ref{sec:proof}.

\begin{theorem}\label{thm:fvector}
For each $n\geq d+1$ we have that
$$
\big(f_k(\sV_{n+1,d})\big)_{k=0}^{d-1}\overset{d}{=} \big(f_{d-k-1}(\tilde P_{n,d}^d)\big)_{k=0}^{d-1},
$$
where $\overset{d}{=}$ denotes equality in distribution of random vectors.
\end{theorem}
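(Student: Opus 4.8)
The plan is to realise the typical cell as the intersection of $\SS^d$ with a polyhedral cone, pass to the polar cone, and recognise a hyperplane cross-section of the polar cone as a beta' polytope; the index reversal $k\mapsto d-k-1$ then emerges as the composition of two order-reversing face correspondences. As a first step I would linearise the distance condition. By Proposition~\ref{prop:typical_voronoi_rep} I may work with the cell $\{z\in\SS^d:\rho(e,z)\le\rho(X_j,z),\ j=1,\dots,n\}$. Since $\rho(u,v)=\arccos\langle u,v\rangle$ is decreasing in the inner product, the condition $\rho(e,z)\le\rho(X_j,z)$ is equivalent to $\langle X_j-e,z\rangle\le 0$, so the cell equals $\SS^d\cap C$ with
$$
C:=\{z\in\RR^{d+1}:\langle X_j-e,z\rangle\le 0\ \text{for } j=1,\dots,n\}
$$
a polyhedral cone with apex at the origin. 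Writing $x=(x_0,\bar x)$ with $\bar x\in\RR^d$, each normal $X_j-e=(X_{j,0}-1,\bar X_j)$ has strictly negative zeroth coordinate almost surely, so $C$ is full-dimensional and its polar cone $C^\circ=\pos(X_1-e,\dots,X_n-e)$ is pointed; for $n\ge d+1$ the points lie in general position almost surely, so $C$ is pointed and $C^\circ$ full-dimensional as well.

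Next I would chain together two face correspondences. By the definition of the faces of a spherical polytope, $f_k(\SS^d\cap C)$ is the number of $(k+1)$-dimensional faces of $C$. The anti-isomorphism of the face lattices of the pointed, full-dimensional cones $C$ and $C^\circ$ in $\RR^{d+1}$ sends $j$-faces to $(d+1-j)$-faces, so this equals the number of $(d-k)$-faces of $C^\circ$. Finally, because $C^\circ$ lies in the halfspace $\{x_0\le 0\}$ and meets $\{x_0=0\}$ only at the origin, its cross-section $Q:=C^\circ\cap\{x_0=-1\}$ is a $d$-dimensional polytope whose $(m-1)$-faces correspond bijectively to the $m$-faces of $C^\circ$. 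Composing the two bijections yields $f_k(\SS^d\cap C)=f_{d-k-1}(Q)$ for $k=0,\dots,d-1$. The cross-section of the ray through $X_j-e$ is the point $(-1,Y_j)$ with $Y_j:=\bar X_j/(1-X_{j,0})\in\RR^d$, so after identifying $\{x_0=-1\}$ with $\RR^d$ we have $Q=\operatorname{conv}(Y_1,\dots,Y_n)$.

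It then remains to identify the law of $Q$. The map $X_j\mapsto Y_j$ is exactly the stereographic projection of $\SS^d$ from the north pole $e$, and a routine Jacobian computation (using that the pulled-back spherical metric is conformally $\bigl(2/(1+\|y\|^2)\bigr)^2$ times the Euclidean one) shows that the image of $\sigma_d$ has Lebesgue density proportional to $(1+\|y\|^2)^{-d}$ on $\RR^d$. Since this is a probability density, the constant is forced to equal $\tilde c_{d,d}$, so each $Y_j$ has density $\tilde f_{d,d}$; as the $X_j$ are i.i.d.\ uniform, the $Y_j$ are i.i.d.\ with this density and hence $Q\eqdistr\tilde P_{n,d}^d$. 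Combining the three steps delivers the claim $\bigl(f_k(\sV_{n+1,d})\bigr)_{k=0}^{d-1}\eqdistr\bigl(f_{d-k-1}(\tilde P_{n,d}^d)\bigr)_{k=0}^{d-1}$.

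I expect the main obstacle to lie in the combinatorial bookkeeping of the middle step rather than in the distributional computation. One must confirm that $C$ and $C^\circ$ are genuinely pointed and full-dimensional, so that the dimension count in both anti-isomorphisms is exactly $j\mapsto d+1-j$ and $m\mapsto m-1$, and that no face is lost or double-counted at the cone apex (equivalently, at the empty face of $Q$). This is precisely where the hypothesis $n\ge d+1$ together with the non-degeneracy noted in the footnote is needed; by contrast, the stereographic Jacobian of the last step, while it is what pins down the exponent $\beta=d$, is standard once the projection has been recognised.
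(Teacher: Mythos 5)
Your proposal is correct, and its skeleton coincides with the paper's proof: Proposition~\ref{prop:typical_voronoi_rep}, linearisation of the geodesic-distance condition via the monotonicity of $\arccos$, the cone $C=\bigcap_j\{z:\langle X_j-e,z\rangle\le 0\}$, passage to the polar cone $\pos(X_1-e,\dots,X_n-e)$, and the cross-section at $\{x_0=-1\}$, with exactly the same two order-reversing face correspondences producing the index shift $k\mapsto d-k-1$ (you are in fact more careful than the paper in checking pointedness and full-dimensionality of both cones, which is where $n\ge d+1$ enters). The one step where you genuinely diverge is the identification of the law of the cross-section: the paper decomposes $X_i=e\cos\theta_i+U_i\sin\theta_i$, writes the section vertices as $U_i/R_i$ with $R_i=\tan(\theta_i/2)$, computes the density of $R_i$ from that of the height $h_i$ (its Lemmas 4.1 and 4.2), and then needs the distributional symmetry $R_i\eqdistr 1/R_i$ to replace $U_i/R_i$ by $U_iR_i$ before matching the density $\tilde f_{d,d}$. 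You instead recognise $Y_j=\bar X_j/(1-X_{j,0})$ as the stereographic projection of $X_j$ from $e$ and read off the density $\propto(1+\|y\|^2)^{-d}$ from the conformal factor $\bigl(2/(1+\|y\|^2)\bigr)^2$ of the pulled-back round metric. Your route is slightly cleaner: it identifies the vertices of the cross-section as i.i.d.\ beta$'$ points \emph{almost surely pointwise}, with no inversion trick needed, and it makes the appearance of $\beta=d$ geometrically transparent. What the paper's more computational route buys is precisely that inversion step as a byproduct: it isolates the fact, recorded in its closing remark, that the beta$'$ distribution with $\beta=d$ is invariant under inversion in the unit sphere --- a statement your argument bypasses (though it follows from yours too, since inversion of the projection corresponds to projecting from the opposite pole).
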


\subsection{Reduction to beta polytopes}
Recall that $X_1,\ldots,X_{n}$ are independent and uniformly distributed random points on $\SS^d$. Denote their convex hull  {in $\R^{d+1}$} by $P_{n, d+1}^{-1}:={\rm conv}(X_1,\ldots,X_n)$. This random polytope is a particular case of a \emph{beta polytope} with parameter $\beta = -1$ studied in~\cite{kabluchko_formula,KabluchkoTemesvariThaele,beta_polytopes}. We follow the notation used there. Our next theorem expresses the expected $f$-vector of $\sV_{n,d}$ in terms of that of $P_{n,d+1}^{-1}$.

\begin{theorem}\label{thm:fvector_beta}
For each {  $n\geq d+1$} and $k\in\{0,1,\ldots,d\}$ we have that
$$
\E f_k(\sV_{n,d}) = \frac{d-k+1}{n}\, \E f_{d-k}(P_{n,d+1}^{-1}).
$$
\end{theorem}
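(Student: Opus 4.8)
The plan is to exhibit an almost-sure combinatorial duality between the spherical Voronoi tessellation $\mfm_{n,d}$ and the beta polytope $P:=P_{n,d+1}^{-1}=\mathrm{conv}(X_1,\dots,X_n)$, and then to feed the resulting face identity into Proposition~\ref{prop:total_cells}. The starting point is that the geodesic distance satisfies $\rho(x,z)=\arccos\langle x,z\rangle$, so that, $\arccos$ being decreasing, the defining inequality of a Voronoi cell becomes linear: for $z\in\SS^d$,
$$
\rho(X_i,z)\le\rho(X_j,z)\iff\langle X_i,z\rangle\ge\langle X_j,z\rangle\iff\langle X_i-X_j,z\rangle\ge 0.
$$
Hence $C_{i,n}=\SS^d\cap K_i$, where $K_i:=\{z\in\R^{d+1}:\langle X_i-X_j,z\rangle\ge 0\text{ for all }j\}$ is precisely the set of directions $z$ in which the linear functional $x\mapsto\langle x,z\rangle$ is maximized over $\{X_1,\dots,X_n\}$ at $X_i$. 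In other words, $K_i$ is the normal cone of the vertex $X_i$ of $P$.

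Second, I would identify the whole tessellation with the normal fan of $P$. For $n\ge d+2$ the points span $\R^{d+1}$ and contain the origin in their convex hull's interior almost surely, so $P$ is a full-dimensional polytope with $0$ in its interior and $X_1,\dots,X_n$ as its vertices; its normal fan $\mathcal N(P)$ is then a complete fan whose cones are the normal cones $N(F)$, one for each nonempty face $F$ of $P$, with $\dim N(F)=(d+1)-\dim F$. The previous paragraph shows that the full-dimensional cones $N(X_i)=K_i$ of $\mathcal N(P)$ meet $\SS^d$ exactly in the Voronoi cells $C_{i,n}$, and more generally every cone $N(F)$ meets $\SS^d$ in a spherical polytope of dimension $\dim N(F)-1=d-\dim F$. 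Since the faces of the spherical tessellation are by definition the intersections with $\SS^d$ of the cones making up its cells, and since the face lattice of $\mathcal N(P)$ is anti-isomorphic to that of $P$, this sets up a dimension-reversing bijection between the $k$-dimensional faces of $\mfm_{n,d}$ and the $(d-k)$-dimensional faces of $P$. As each such face is counted exactly once on either side, this yields the pathwise identity
$$
f_k(\mfm_{n,d})=f_{d-k}(P_{n,d+1}^{-1}),\qquad k\in\{0,1,\dots,d\},
$$
valid almost surely.

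Finally, taking expectations and combining with Proposition~\ref{prop:total_cells} gives
$$
\E f_k(\sV_{n,d})=\frac{d-k+1}{n}\,\E f_k(\mfm_{n,d})=\frac{d-k+1}{n}\,\E f_{d-k}(P_{n,d+1}^{-1}),
$$
which is the asserted formula.

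I expect the main obstacle to be the careful verification of the normal-fan duality as a statement about the combinatorial $f$-vectors rather than about individual cells: one must check that the assignment $F\mapsto\SS^d\cap N(F)$ is a dimension-reversing bijection from the proper nonempty faces of $P$ onto the faces of the tessellation, that it is compatible with the paper's definition of a $k$-face of a spherical polytope as the intersection of a $(k+1)$-face of the generating cone with $\SS^d$, and that faces shared by several cells are not over-counted (matching the fact that each face of $P$ is counted once). The remaining delicate point is degeneracy: the argument requires $P$ to be full-dimensional with $0$ in its interior, which holds almost surely only for $n\ge d+2$; the boundary case $n=d+1$ permitted by the statement should be handled either through the conventions described in the footnote or by checking directly that the relevant expectations agree.
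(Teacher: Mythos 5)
Your proposal is correct in substance and takes essentially the same route as the paper: both arguments establish the almost sure duality $f_k(\mfm_{n,d})=f_{d-k}(P_{n,d+1}^{-1})$ for the whole tessellation and then divide out the normality factor $d-k+1$ via Proposition~\ref{prop:total_cells}. The difference is only in packaging: the paper verifies the duality by hand, representing the relative interiors of the tessellation's faces by systems of equalities and strict inequalities as in~\eqref{eq:def_S1} and matching the nonempty ones with faces of the convex hull via supporting hyperplanes (following \cite[pp.~472--473]{SW} and \cite{EdelsbrunnerNikitenko}), whereas you observe that $C_{i,n}=\SS^d\cap N(X_i)$ — which is exactly the paper's computation~\eqref{eq:def_C_i_n} — and then cite the standard anti-isomorphism between the face lattice of a full-dimensional polytope and its normal fan instead of re-proving it. One assertion in your write-up is false, though harmlessly so: it is \emph{not} true that $0$ lies in the interior of $P_{n,d+1}^{-1}$ almost surely for $n\geq d+2$; by Wendel's classical computation, the probability that $X_1,\ldots,X_n$ all lie in a closed half-space equals $2^{-n+1}\sum_{k=0}^{d}\binom{n-1}{k}>0$ for every finite $n$. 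Fortunately you never use this hypothesis: completeness of the normal fan and the dimension-reversing bijection $F\mapsto N(F)$ with $\dim N(F)=(d+1)-\dim F$ require only that $P$ be full-dimensional in $\R^{d+1}$, which does hold almost surely once $n\geq d+2$ (and all $X_i$ are vertices surely, being points of the unit sphere), so you should simply delete the claim about the origin. Your closing caveat about $n=d+1$ is apt: there $P$ is almost surely a $d$-dimensional simplex, the cones acquire a lineality space, and it is the footnote's degenerate-face conventions that make the identity persist (e.g., for $d=2$, $n=3$ each lune-shaped cell counts as having one vertex, matching $f_2$ of the triangle); the paper handles this boundary case no more explicitly than you do.
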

{ 
\begin{remark}
There is a duality between the faces of the spherical Voronoi tessellation $\mfm_{n,d}$ and the faces of the convex hull of $X_1,\ldots,X_n$, which was stated already in the work of Edelsbrunner and Nikitenko~\cite[pp.~3226--3227]{EdelsbrunnerNikitenko}. It says that for arbitrary $\ell\in \{0,\ldots, d\}$ and $1\leq i_0 < \ldots < i_\ell\leq n$, the convex hull of $X_{i_0},\ldots,X_{i_\ell}$ is a face of the convex hull of $X_1,\ldots,X_n$ if and only if the spherical Voronoi cells $C_{i_0,n}, \ldots, C_{i_\ell,n}$ have a non-empty intersection. This intersection is then a common face of these cells of dimension $d-\ell$, with probability $1$. In the proof given below, we provide a detailed  explanation of this duality based on~\cite[pp.~472--473]{SW}.
\end{remark}
\begin{proof}[Proof of Theorem~\ref{thm:fvector_beta}]
First of all, let us provide a general representation for the faces of the spherical Voronoi tessellation $\mfm_{n,d}$.
Take some $i\in \{1,\ldots,n\}$ and consider the cell
\begin{equation}\label{eq:def_C_i_n}
C_{i,n} = \SS^d \cap \{y\in \R^{d+1}: \langle y, X_i\rangle \geq \langle y, X_j\rangle \text{ for all } j\in \{1,\ldots,n\} \}.
\end{equation}
Here, $\langle\,\cdot\,,\,\cdot\,\rangle$ stands for the standard scalar product in $\RR^{d+1}$.
In order to represent the relative interiors of the faces of this cell, we need to turn some of the inequalities $\langle y, X_i\rangle \geq \langle y, X_j\rangle$ into equalities, while making the remaining  inequalities strict; see, e.g., \cite[7.2(e) on p.~135]{padberg_book}.  Thus, the relative interiors of the faces of $\mfm_{n,d}$ admit a representation of  the form
\begin{equation}\label{eq:def_S1}
S := \{y\in \SS^{d}: \langle y, X_{i_0}\rangle = \ldots = \langle y, X_{i_\ell}\rangle > \langle y, X_j\rangle \text{ for all } j\notin \{i_0,\ldots,i_\ell\}\}
\end{equation}
for some $\ell\in \{0,\ldots, n\}$ and  $1\leq i_0 < \ldots < i_\ell\leq n$.  Conversely, any set $S$ of the above form~\eqref{eq:def_S1} is a relative interior of a face of $\mfm_{n,d}$ provided $S\neq \varnothing$.  Observe that for $\ell > d$ the vectors  $X_{i_1}-X_{i_0}, \ldots, X_{i_\ell}-X_{i_0}$ linearly span $\R^{d+1}$ with probability $1$ and hence the only solution of $\langle y, X_{i_0}\rangle = \ldots = \langle y, X_{i_\ell}\rangle$ is $y=0$ (implying that $S= \varnothing$). Thus, we may assume that $\ell\in \{0,\ldots, d\}$. Then, the vectors $X_{i_1}-X_{i_0}, \ldots, X_{i_\ell}-X_{i_0}$ are linearly independent almost surely and hence the dimension of the set $S$ defined in~\eqref{eq:def_S1} is $d-\ell$, provided $S\neq \varnothing$.

Let us now provide a description of the faces of the polytope ${\rm conv}(X_1,\ldots,X_n)$. For $\ell\in \{0,\ldots, d\}$ and $1\leq i_0 < \ldots < i_\ell\leq n$ let $E$ be the  affine subspace through the points $X_{i_0},\ldots,X_{i_\ell}$. In the following, we exclude an event of probability $0$ and assume that $E$ is $\ell$-dimensional.  Let $E_0$ be the translate of $E$ passing through the origin of $\R^{d+1}$, that is, $E_0$ is the linear hull of $X_{i_1}-X_{i_0}, \ldots, X_{i_\ell}-X_{i_0}$.   Put
$$
F=E_0^\perp= \{y\in \RR^{d+1}: \langle y, X_{i_0}\rangle = \ldots = \langle y, X_{i_\ell}\rangle\}
$$
and note that $F$ is a linear subspace of dimension $d+1-\ell$. The intersection of $F$ with $\SS^d$ is a $(d-\ell)$-dimensional great subsphere of $\SS^d$ consisting of all points $y\in \SS^d$ having equal geodesic distances to $X_{i_0},\ldots,X_{i_\ell}$. For $y\in F\cap \SS^d$ write $r(y):=\rho(y, X_{i_0}) = \ldots =\rho(y,X_{i_\ell})$. Denote by ${\rm Cap}(y,r(y)):=\{z\in\SS^d:\rho(y,z) \leq r(y)\}$ the closed spherical cap centred at $y$ with geodesic radius $r(y)$ and put
\begin{equation}\label{eq:def_S2}
S :=\{y\in F\cap\SS^d:{\rm Cap}(y,r(y))\cap\{X_1,\ldots,X_n\}= \{X_{i_0},\ldots, X_{i_\ell}\}\},
\end{equation}
which is just an equivalent form of~\eqref{eq:def_S1}.

We now claim that $S\neq\varnothing$ if and only if ${\rm conv}(X_{i_0},\ldots,X_{i_\ell})$ is a face of   ${\rm conv} (X_1,\ldots,X_n)$. Indeed, if $S\neq \varnothing$, then there is $y\in \SS^d$ such that $a= \langle y, X_{i_0}\rangle = \ldots = \langle y, X_{i_\ell}\rangle$ and $\langle y, X_j\rangle > a$ for all indices $j\notin \{i_0,\ldots, i_\ell\}$. Consider the hyperplane $H:= \{z\in \R^{d+1}: \langle z,y \rangle = a\}$. Then, $H$ is a supporting hyperplane for ${\rm conv}(X_1,\ldots,X_n)$ and ${\rm conv} (X_{i_0},\ldots, X_{i_\ell}) = {\rm conv} (X_1,\ldots,X_n) \cap H$ is the corresponding face of ${\rm conv} (X_1,\ldots,X_n)$, thus proving the forward direction of the claim. To prove the backward direction, one assumes that ${\rm conv}(X_{i_0},\ldots,X_{i_\ell})$ is a face of ${\rm conv}(X_1,\ldots,X_n)$ corresponding to some supporting hyperplane $H$ which must be of the form $H:= \{z\in \R^{d+1}: \langle z,y \rangle = a\}$ for some $y\in \SS^d$ and $a\in \R$. It follows that  $a= \langle y, X_{i_0}\rangle = \ldots = \langle y, X_{i_\ell}\rangle$ and (without restriction of generality) $\langle y, X_j\rangle > a$ for all $j\notin \{i_0,\ldots, i_\ell\}$. This proves our claim. Although we shall not need it, notice also the following consequence of~\eqref{eq:def_S1} and~\eqref{eq:def_C_i_n}: the closure of $S$ coincides with $C_{i_0,n}\cap \ldots \cap C_{i_\ell,n}$.

Summarizing, we proved that there is a bijective correspondence between the $(d-\ell)$-dimensional faces of $\mfm_{n,d}$, the $\ell$-dimensional faces of ${\rm conv} (X_1,\ldots,X_n)$, and the tuples $1\leq i_0 < \ldots < i_\ell\leq n$ for which the set  $S$ defined in~\eqref{eq:def_S1} or~\eqref{eq:def_S2} is non-empty.  Thus, taking $\ell:= d-k$ we conclude that
\begin{equation}\label{eq:DualityTessPoly}
f_{d-k}(P_{n,d+1}^{-1}) = f_k(\mfm_{n,d}) \;\;\; \text{almost surely}.
\end{equation}
On the other hand, by Proposition~\ref{prop:total_cells} we have
$$
\EE f_k (\mfm_{n,d}) = \frac{n}{d-k+1}\,\EE f_k(\sV_{n,d}).
$$
Putting these results together completes the proof of Theorem~\ref{thm:fvector_beta}.
\end{proof}
} 

\section{Explicit formulas and special cases}

\subsection{Explicit formula for the expected \texorpdfstring{$f$}{f}-vector}

{
The expected $f$-vectors of beta and beta' polytopes have been explicitly determined in the series of works~\cite{kabluchko_algorithm,kabluchko_formula,kabluchko_angles,kabluchko_poisson_zero,beta_polytopes}. The main results we shall rely on are stated in Theorems 7.1 and 7.3 of~\cite{kabluchko_formula}.   Combining these formulae with Theorem~\ref{thm:fvector} or Theorem~\ref{thm:fvector_beta} we arrive at the following explicit expression for the $f$-vector of the typical Voronoi cell $\sV_{n+1,d}$.}

\begin{theorem}\label{thm:fvectorExplicit}
For all $d\geq 2$, $n\geq d+1$ and  $\ell\in \{1,\ldots,d\}$ we have
\begin{align}
\E f_{d-\ell} (\sV_{n+1,d})
&=
\frac 1 \pi  \left(\frac{\Gamma(\frac{d+1}{2})}{\sqrt \pi\, \Gamma(\frac d2)}\right)^{n-\ell}
\sum_{\substack{m\in \{\ell,\ldots,d\} \\ m\equiv d \Mod{2}}}
\tilde I_d(n,m) (md-1) \tilde J_d(m,\ell)\label{eq:f_vect_main1}
\\
&=
\frac 1 \pi  \left(\frac{\Gamma(\frac{d+1}{2})}{\sqrt \pi\, \Gamma(\frac d2)}\right)^{n-\ell}
\sum_{\substack{m\in \{\ell,\ldots,d\} \\ m\equiv d \Mod{2}}}
I_{d-1}(n,m) ((m+1)(d-1)+1) J_{d-1}(m,\ell) \label{eq:f_vect_main2}
,
\end{align}
where
\begin{align*}
\tilde I_d(n,m)
&:=
\binom{n}{m} \int_{-\pi/2}^{+\pi/2} (\cos x)^{d m -1} (\tilde F_d(x))^{n-m} \,\dint x,\\
\tilde J_d(m,\ell)
&:=
\binom{m}{\ell} \int_{-\infty}^{+\infty} (\cosh y)^{-d m + 1} (\tilde F_d(\ii y))^{m-\ell} \,\dint y,\\
I_{d-1}(n,m)
&:=
\binom{n}{m} \int_{-\pi/2}^{+\pi/2} (\cos x)^{(d-1)(m+1)} (F_{d-1}(x))^{n-m} \,\dint x,\\
J_{d-1}(m,\ell)
&:=
\binom{m}{\ell} \int_{-\infty}^{+\infty} (\cosh y)^{-(d-1)(m+1) - 2} (F_{d-1}(\ii y))^{m-\ell} \,\dint y,\\
\tilde F_d(z)
&=
F_{d-1}(z)
:=
\int_{-\pi/2}^z (\cos y)^{d-1} \,\dint y, \qquad  {  z\in \mathbb C}.
\end{align*}
\end{theorem}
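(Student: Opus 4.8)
The plan is to establish the two displayed identities independently, each by feeding one of the two distributional reductions proved above into the corresponding explicit $f$-vector formula for beta and beta' polytopes recorded in Theorems~7.1 and~7.3 of~\cite{kabluchko_formula}. Since both \eqref{eq:f_vect_main1} and \eqref{eq:f_vect_main2} then evaluate the single quantity $\E f_{d-\ell}(\sV_{n+1,d})$, their equality is an automatic by-product (and encodes a beta/beta' duality).

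For \eqref{eq:f_vect_main1} I would first apply Theorem~\ref{thm:fvector} with $k=d-\ell$. Because $f_k(\sV_{n+1,d})\overset{d}{=}f_{d-k-1}(\tilde P_{n,d}^d)$, this gives $\E f_{d-\ell}(\sV_{n+1,d})=\E f_{\ell-1}(\tilde P_{n,d}^d)$, so the task becomes computing the expected number of $(\ell-1)$-faces of the beta' polytope $\tilde P_{n,d}^d$ of parameter $\beta=d$ in dimension $d$. I would then substitute this parameter value into the general beta' formula of Theorem~7.3 of~\cite{kabluchko_formula}. The specialization $\beta=d$ is exactly the value for which the base function occurring in the external- and internal-angle integrals reduces to $\tilde F_d(z)=\int_{-\pi/2}^z(\cos y)^{d-1}\,\dint y$; reading off the remaining pieces produces the quantities $\tilde I_d(n,m)$ and $\tilde J_d(m,\ell)$, the weight $(md-1)$, and the summation over $m\in\{\ell,\dots,d\}$ with $m\equiv d \Mod{2}$.

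For \eqref{eq:f_vect_main2} I would instead apply Theorem~\ref{thm:fvector_beta} with $n$ replaced by $n+1$ and $k=d-\ell$, obtaining $\E f_{d-\ell}(\sV_{n+1,d})=\tfrac{\ell+1}{n+1}\,\E f_\ell(P_{n+1,d+1}^{-1})$. This reduces the problem to the expected number of $\ell$-faces of a beta polytope of parameter $\beta=-1$ in dimension $d+1$, which I would compute from Theorem~7.1 of~\cite{kabluchko_formula} specialized to $\beta=-1$, $D=d+1$. The decisive point is that this specialization makes the base function of the beta formula equal to $F_{d-1}$, which by the last line of the statement coincides with $\tilde F_d$; the different combinatorial shape of the beta (as opposed to beta') formula then accounts for the different integrand exponents $(d-1)(m+1)$ and $-(d-1)(m+1)-2$ appearing in $I_{d-1}(n,m)$ and $J_{d-1}(m,\ell)$ and for the weight $(m+1)(d-1)+1$.

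The conceptual content is slight, and the real work is bookkeeping, which is where I expect the difficulties to concentrate. Three points need care. First, $\beta=-1$ sits on the boundary of the admissible range for beta polytopes (there the density degenerates to the uniform law on $\SS^d$), so I must verify that the formula of Theorem~7.1 remains valid at $\beta=-1$, e.g.\ by analytic continuation in $\beta$ together with a check that none of the relevant integrals degenerate. Second, the factor $\tfrac{\ell+1}{n+1}$ supplied by Theorem~\ref{thm:fvector_beta} has to be absorbed into the binomial coefficients $\binom{n+1}{\cdot}$ produced by Theorem~7.1 so as to reproduce exactly the $\binom{n}{m}$ and $\binom{m}{\ell}$ built into $I_{d-1}$ and $J_{d-1}$; this is a delicate but routine rewriting of factorials. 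Third, all normalizing constants must be tracked, in particular recognising $\tfrac{\Gamma((d+1)/2)}{\sqrt\pi\,\Gamma(d/2)}$ as the reciprocal of $\int_{-\pi/2}^{+\pi/2}(\cos y)^{d-1}\,\dint y=\tilde F_d(\pi/2)$, so that its $(n-\ell)$-th power absorbs the normalizations of the $\tilde F_d$-factors, whose total degree in the integrands is $(n-m)+(m-\ell)=n-\ell$, while the overall prefactor $\tfrac1\pi$ comes from the Gamma-function values in the internal-angle integral.
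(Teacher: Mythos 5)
Your proposal matches the paper's proof essentially verbatim: equation \eqref{eq:f_vect_main1} via Theorem~\ref{thm:fvector} combined with Theorem~7.3 of \cite{kabluchko_formula} at $\alpha=\beta=d$, and equation \eqref{eq:f_vect_main2} via Theorem~\ref{thm:fvector_beta} (with $n$ replaced by $n+1$, $k=d-\ell$) combined with Theorem~7.1 of \cite{kabluchko_formula} at $\beta=-1$, $\alpha=d-1$, with the equality of the two right-hand sides obtained as a by-product exactly as in the paper's remark. Your worry about $\beta=-1$ lying on the boundary is unnecessary, since the cited Theorem~7.1 explicitly covers the case $\beta=-1$, where the beta polytope is by definition the convex hull of uniform points on the sphere, which is precisely $P_{n+1,d+1}^{-1}$ here.
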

{ 
\begin{remark}
Note that the imaginary unit $\ii= \sqrt{-1}$ appears because the $J$-quantities are related to the analytic continuation of the $I$-quantities~\cite{kabluchko_formula}. The integral in the definition of $\tilde F_d(z) =F_{d-1}(z)$ is taken along any contour connecting $-\pi/2$ and $z$.  Observe also that the quantities $\tilde J_d(m,\ell)$ and $J_{d-1}(m,\ell)$ are real-valued as one can see by making the substitution $y\mapsto -y$ in the defining integrals and using the fact that $\tilde F_d(\ii y)$ is the complex conjugate of $\tilde F_d(-\ii y)$, for all $y\in\R$.
\end{remark}
}
\begin{proof}[Proof of Theorem~\ref{thm:fvectorExplicit}]
We can give two proofs based on reduction of the spherical Voronoi tessellation to beta' and beta polytopes. These proofs yield~\eqref{eq:f_vect_main1} and~\eqref{eq:f_vect_main2}, respectively. Let us start with the approach based on beta' polytopes. By Theorem~\ref{thm:fvector}, we have
$$
\E f_{d-\ell} (\sV_{n+1,d}) = \E f_{\ell-1}(\tilde P_{n,d}^d).
$$
By~\cite[Theorem 7.3]{kabluchko_formula} applied with $\alpha = \beta =d$, we obtain
$$
\E f_{\ell-1}(\tilde P_{n,d}^d) = \frac {2\cdot n!} {\ell!}  \left(\frac{\Gamma(\frac{d+1}{2})}{d \sqrt \pi\, \Gamma(\frac d2)}\right)^{n-\ell}
\sum_{\substack{m\in \{\ell,\ldots,d\} \\ m\equiv d \Mod{2}}}
\tilde b\{n,m\} \left(m-\frac 1d\right) \tilde a\!\left[m-\frac{2}{d}, \ell - \frac 2 d\right],
$$
where
\begin{align*}
\tilde b\{n,m\} &=\frac {d^{n-m}}{(n-m)!} \int_{-\pi/2}^{+\pi/2} (\cos x)^{d m -1} (\tilde F_d(x))^{n-m} \;\dint x,\\
\tilde a\!\left[m-\frac{2}{d}, \ell - \frac 2 d\right] &= \frac {d^{m-\ell+1}}{(m-\ell)!} \cdot \frac 1 {2\pi} \int_{-\infty}^{+\infty} (\cosh y)^{-d m +1} (\tilde F_d(\ii y))^{m-\ell} \;\dint y,
\end{align*}
and where $\tilde F_d$ is as above. After straightforward transformations, we arrive at~\eqref{eq:f_vect_main1}.

On the other hand, we can give an alternative proof based on Theorem~\ref{thm:fvector_beta} which states that
$$
\E f_{d-\ell}(\sV_{n+1,d}) = \frac{\ell+1}{n+1} \, \E f_{\ell}(P_{n+1,d+1}^{-1}).
$$
The expected $f$-vector of the beta polytope is given explicitly in~\cite[Theorem~7.1]{kabluchko_formula}. Applying this theorem with $\beta=-1$ and $\alpha = d-1$, we obtain
\begin{multline*}
\E f_{\ell}(P_{n+1,d+1}^{-1}) = \frac{2\cdot (n+1)!}{(\ell+1)!} \left(\frac{\Gamma(\frac{d-1}{2})}{2\sqrt \pi \, \Gamma(\frac{d}{2})}\right)^{n-\ell} \\
\times \sum_{\substack{m\in \{\ell,\ldots,d\} \\ m\equiv d \Mod{2}}}
b\{n+1,m+1\} \left(m + 1 + \frac 1{d-1}\right)  a\!\!\left[m+1+\frac{2}{d-1}, \ell + 1 + \frac 2 {d-1}\right],
\end{multline*}
where
\begin{align*}
b\{n+1,m+1\} &=\frac {(d-1)^{n-m}}{(n-m)!} \int_{-\pi/2}^{+\pi/2} (\cos x)^{(d-1)(m+1)} (F_{d-1}(x))^{n-m} \,\dint x,\\
a\!\!\left[m+1 + \frac{2}{d-1}, \ell + 1 + \frac 2 {d-1}\right] &= \frac {(d-1)^{m-\ell+1}}{(m-\ell)!} \cdot \frac 1 {2\pi} \int_{-\infty}^{+\infty} (\cosh y)^{-(d-1)(m+1)-2} (F_{d-1}(\ii y))^{m-\ell} \,\dint y.
\end{align*}
After some transformations, we arrive at~\eqref{eq:f_vect_main2}.
\end{proof}

\begin{remark}
In particular, we obtained an indirect proof that the right-hand sides of~\eqref{eq:f_vect_main1} and~\eqref{eq:f_vect_main2} are equal. Finding a direct proof of this equality seems non-trivial. Let us also mention that, according to our numerical computations, the individual summands in~\eqref{eq:f_vect_main1} and~\eqref{eq:f_vect_main2} are, in general, not equal.
\end{remark}

\begin{proposition}
Let $d\geq 2$, $n\geq d+2$ and $k\in\{0,\ldots,d-1\}$. If $d$ is even, then $\E f_k(\sV_{n,d})$ is a rational number. If $d$ is odd, then $\E f_k(\sV_{n,d})$ is a linear combination of the numbers  $\pi^{-2r}$, where $r=0,1,\ldots, \lfloor \frac{n-d+k-1}{2}\rfloor$, with rational coefficients.
\end{proposition}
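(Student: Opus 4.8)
The plan is to read off the arithmetic nature of $\E f_k(\sV_{n,d})$ directly from the explicit formula in Theorem~\ref{thm:fvectorExplicit}. Writing $\E f_k(\sV_{n,d})=\E f_{d-\ell}(\sV_{(n-1)+1,d})$ with $\ell:=d-k$ and $N:=n-1$ (so that $N\geq d+1$ by the hypothesis $n\geq d+2$), formula~\eqref{eq:f_vect_main1} presents the quantity as $\tfrac1\pi\,(\tilde F_d(\pi/2))^{-(N-\ell)}$ times a finite sum of products $\tilde I_d(N,m)(md-1)\tilde J_d(m,\ell)$. Since $md-1\in\ZZ$, the whole problem reduces to tracking, for each of the three remaining factors, which powers of $\pi$ occur and with what parity. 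The computation splits on the parity of $d$, the dividing line being the elementary fact that for $d$ even the antiderivative $\tilde F_d(x)=\int_{-\pi/2}^x(\cos y)^{d-1}\,\dint y$ is a polynomial in $\sin x$ with rational coefficients, whereas for $d$ odd one has $\tilde F_d(x)=c_d\,(x+\tfrac\pi2)+h(x)$, where $c_d=\binom{d-1}{(d-1)/2}2^{-(d-1)}\in\QQ_{>0}$ and $h$ is a finite rational combination of the functions $\sin(2jx)$ with $h(\pi/2)=0$.

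For $d$ even I would argue as follows. In $\tilde I_d(N,m)$ the exponent $dm-1$ is odd (as $m\equiv d\Mod2$), so the substitution $t=\sin x$ turns the integral into $\int_{-1}^1$ of a polynomial with rational coefficients; hence $\tilde I_d(N,m)\in\QQ$, and the prefactor $\tilde F_d(\pi/2)$ is rational as well. In $\tilde J_d(m,\ell)$ one uses $\tilde F_d(\ii y)=Q(\ii\sinh y)$ for the rational polynomial $Q$ with $\tilde F_d=Q(\sin\cdot)$, and the substitution $u=\sinh y$ (here $dm$ is even, so $dm/2\in\ZZ$), reducing each surviving summand to a Beta integral $\int_{\RR}(1+u^2)^{-dm/2}u^{2s}\,\dint u=\Gamma(s+\tfrac12)\Gamma(\tfrac{dm}2-s-\tfrac12)/\Gamma(\tfrac{dm}2)$; the two half-integer Gammas contribute exactly one factor of $\pi$, so $\tilde J_d(m,\ell)\in\pi\QQ$. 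The global $\tfrac1\pi$ then cancels this single factor of $\pi$ and the product is rational, proving the even case.

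For $d$ odd all factors carry genuine powers of $\pi$, and the point is to control their parities and sizes. First, $\tilde F_d(\pi/2)=\pi c_d$, so the prefactor equals $\pi^{-(N-\ell+1)}$ times a rational. For $\tilde I_d(N,m)$ I would expand $(\tilde F_d(x))^{N-m}=(c_dx+c_d\tfrac\pi2+h(x))^{N-m}$ multinomially; a monomial contributing $\pi^b\,x^a\,(\text{product of }c\text{ sines})$ integrates nontrivially against $(\cos x)^{dm-1}$ only when $a\equiv c\Mod2$, and then repeated integration by parts (whose boundary trigonometric values at $\pm\pi/2$ are integers) shows the $x$-integral to be a rational combination of \emph{odd} powers of $\pi$ of size at most $a+1$. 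Since $a+c\equiv0\Mod2$ forces $b\equiv N-m\Mod2$, every exponent occurring in $\tilde I_d(N,m)$ is $\equiv N-m+1\Mod2$ and lies in $[1,N-m+1]$. The delicate factor is $\tilde J_d(m,\ell)$: writing $\tilde F_d(\ii y)=\tfrac{c_d\pi}2+\ii G(y)$ with $G$ a real odd function with rational coefficients, only the even powers in the binomial expansion of $(\tfrac{c_d\pi}2+\ii G)^{m-\ell}$ survive integration (which re-proves reality), and each surviving integral is a rational combination of $\int_{\RR}y^p(\cosh y)^{1-dm}\cosh(2Jy)\,\dint y$. These I would evaluate as derivatives at the integers $s=2J$ of
$$
\Phi(s)=\int_{\RR}e^{sy}(\cosh y)^{1-dm}\,\dint y=\frac{2^{\nu-1}}{\Gamma(\nu)}\,\Gamma\Big(\tfrac{\nu+s}2\Big)\Gamma\Big(\tfrac{\nu-s}2\Big),\qquad \nu:=dm-1,
$$
where convergence of the $\tilde J_d$-integral forces every relevant $J$ to satisfy $2J<\nu$. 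Using $\Gamma(a+\tau)\Gamma(a-\tau)=\Gamma(1+\tau)\Gamma(1-\tau)\prod_{i=1}^{a-1}(i^2-\tau^2)$ with $a=\nu/2\in\ZZ$, and the fact that at integer arguments the odd logarithmic derivatives of $\tau\mapsto\Gamma(a+\tau)\Gamma(a-\tau)$ are rational while the $(2r)$-th equals a rational plus a rational multiple of $\zeta(2r)\in\pi^{2r}\QQ$, a Fa\`a di Bruno expansion shows each such integral lies in $\QQ[\pi^2]$ with even exponents at most $p$. Hence every exponent in $\tilde J_d(m,\ell)$ is $\equiv m-\ell\Mod2$ and lies in $[0,m-\ell]$.

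Finally I would assemble the three factors. In each summand of~\eqref{eq:f_vect_main1} the exponents add to $\equiv-(N-\ell+1)+(N-m+1)+(m-\ell)=0\Mod2$, so only even powers of $\pi$ occur; the largest possible exponent is $-(N-\ell+1)+(N-m+1)+(m-\ell)=0$, so no positive powers survive; and the smallest is at least $-(N-\ell)=-(n-d+k-1)$. Therefore $\E f_k(\sV_{n,d})$ is a rational combination of $\pi^{-2r}$ with $r=0,\ldots,\lfloor\frac{n-d+k-1}2\rfloor$, as claimed. I expect the odd-$d$ analysis of $\tilde J_d(m,\ell)$ to be the main obstacle: one must rule out the ``rogue'' constants $\gamma,\zeta(3),\zeta(5),\dots$ that generically accompany Gamma-function derivatives, and it is precisely the symmetric structure $\Gamma(a+\tau)\Gamma(a-\tau)$ — forcing the odd zeta values to cancel in pairs and leaving only $\zeta(2r)=$ rational $\cdot\,\pi^{2r}$ — that guarantees the outcome is even-powered and $\pi^2$-rational.
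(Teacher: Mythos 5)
Your proposal is correct, but it takes a genuinely different route from the paper. The paper's own proof is a one-line citation: it invokes Theorem~\ref{thm:fvector_beta} together with Theorem~7.2 of~\cite{kabluchko_formula} (equivalently, Theorem~\ref{thm:fvector} together with Theorem~7.4 there), which already record precisely this arithmetic structure for the expected $f$-vectors of beta and beta' polytopes; the proposition is then immediate. You instead re-derive the statement from scratch out of the explicit formula~\eqref{eq:f_vect_main1} of Theorem~\ref{thm:fvectorExplicit}, by tracking parities and ranges of the $\pi$-exponents in the prefactor $(\tilde F_d(\pi/2))^{-(N-\ell)}$ and in $\tilde I_d(N,m)$ and $\tilde J_d(m,\ell)$ separately. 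This is essentially a reconstruction, in the special case $\beta=d$, of the analysis behind the cited theorems; what it buys is a self-contained argument that makes the exponent bound completely transparent --- the cutoff $\lfloor\frac{n-d+k-1}{2}\rfloor$ falls out of your inequality $-(N-\ell+1)+1\le(\text{total exponent})\le 0$ with $N=n-1$, $\ell=d-k$, whereas the citation hides this bookkeeping. Your parity accounting checks out: for even $d$ the substitutions $t=\sin x$, $u=\sinh y$ and the Beta integral do give $\tilde I_d\in\QQ$, $\tilde J_d\in\pi\QQ$; for odd $d$ your claims that the exponents in $\tilde I_d(N,m)$ are $\equiv N-m+1$ modulo $2$ in $[1,N-m+1]$ and those in $\tilde J_d(m,\ell)$ are $\equiv m-\ell$ modulo $2$ in $[0,m-\ell]$ are correct (for $\tilde I_d$, the needed facts $\int_{-\pi/2}^{\pi/2}x^a\cos(2qx)\,\dint x$, $\int_{-\pi/2}^{\pi/2}x^a\sin(2qx)\,\dint x$, $\int_{-\pi/2}^{\pi/2}x^a\,\dint x \in \operatorname{span}_{\QQ}\{\pi^j: j\ \text{odd},\ 1\le j\le a+1\}$ follow by induction via integration by parts, since the boundary terms at $\pm\pi/2$ with $x^0$-coefficient carry $\sin(q\pi)=0$). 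Two small repairs to the most delicate step, the odd-$d$ analysis of $\tilde J_d$: first, besides $\int_{\RR}y^p(\cosh y)^{1-dm}\cosh(2Jy)\,\dint y$ you also get sinh-type integrals $\int_{\RR}y^p(\cosh y)^{1-dm}\sinh(2Jy)\,\dint y$ (when $p$ is odd and an odd number of $\sinh$-factors survives); these are handled by the same device, as odd-order $s$-derivatives of your $\Phi(s)$. Second, the Fa\`a di Bruno expansion with $\zeta$-value cancellations is heavier than necessary: since $a=\nu/2=(dm-1)/2\in\ZZ$, your factorization gives $\Gamma(a+\tau)\Gamma(a-\tau)=\frac{\pi\tau}{\sin(\pi\tau)}\prod_{i=1}^{a-1}(i^2-\tau^2)$, and one can simply use the Laurent expansion of $\pi\tau/\sin(\pi\tau)$ at the integer $\tau=J$ (whose coefficients are rational multiples of even powers of $\pi$, the simple pole being cancelled by the zero of the polynomial factor at $\tau=J<a$, guaranteed by convergence $2J<\nu$) to conclude directly that all Taylor coefficients lie in $\QQ[\pi^2]$, with no reference to $\psi$-functions or odd zeta values at all.
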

\begin{proof}
This follows from Theorem~\ref{thm:fvector_beta} together with~\cite[Theorem 7.2]{kabluchko_formula}. The same result could be deduced by combining Theorem~\ref{thm:fvector} with \cite[Theorem 7.4]{kabluchko_formula}.
\end{proof}

\begin{remark}
Along with the Voronoi tessellation it is natural to consider the so-called spherical hyperplane tessellation which is defined as follows.
As before, let $X_1,\ldots,X_n$ be $n$ independent, uniformly distributed random points on $\SS^d$, where $n\geq d+1$. Let $X_i^{\bot} = \{z\in\R^{d+1}: \langle z, X_i\rangle=0\}$ be the hyperplane orthogonal to $X_i$.
The hyperplanes $X_1^{\bot},\ldots, X_n^{\bot}$ dissect the sphere $\SS^d$ into spherical polytopes which constitute the \textit{spherical hyperplane tessellation}. The \textit{spherical Crofton cell} $\mathcal Z_{n,d}$ is defined as the almost surely unique cell of this tessellation that contains the north pole $e$. We have $\mathcal Z_{n,d} = \SS^d\cap (G_1\cap \ldots \cap G_n)$, where $G_i$ is the half-space bounded by $X_i^{\bot}$ and containing the north pole $e$. The expected $f$-vector of the spherical Crofton cell $\mathcal Z_{n,d}$ can be computed as follows. We observe that the dual of the convex cone $G_1\cap \ldots\cap G_n$ is the positive hull $D_n:=\pos(X_1^{-},\ldots,X_{n}^-)$ of the points $X_i^-:= - X_i \cdot \sgn \langle X_i,e\rangle$. The points $X_1^-,\ldots,X_n^-$ are independent and uniformly distributed on the lower half-sphere $\SS^d_-:= \{z\in \SS^d: \langle z,e \rangle \leq 0\}$. The corresponding $f$-vectors satisfy
$$
\E f_k (\mathcal Z_{n,d}) = \E f_{k+1} (G_1\cap \ldots\cap G_n) = \E f_{d-k} (D_n)
=
 \E f_{d-k-1} (\SS^d\cap D_n)
$$
for all $k\in \{0,\ldots,d-1\}$.
The expected face numbers of the random spherical polytopes $\SS^d\cap D_n$ that appear on the right-hand side have been explicitly computed in~\cite{kabluchko_poisson_zero}. These polytopes are also closely related to the beta' polytopes $\tilde P_{n,d}^{\beta}$, but this time with $\beta= \frac{d+1}{2}$, see~\cite{KabluchkoMarynychTemesThaele,kabluchko_poisson_zero}.
\end{remark}

\subsection{Low-dimensional cases}
Let us consider the low-dimensional cases separately. For example, in dimension $d=2$, if we take $\ell=1$ in Theorem~\ref{thm:fvectorExplicit} we arrive at the following result of Miles~\cite{MilesSphere}.
\begin{corollary}
For $d=2$ and $n\geq 3$ we have
\begin{equation}\label{eq:f_vect_d_2_miles}
\E f_0(\sV_{n+1,2}) = \E f_1(\sV_{n+1,2}) = 6 \cdot \frac{n-1}{n+1} = 6\Big(1-{2\over n+1}\Big) {.}
\end{equation}
\end{corollary}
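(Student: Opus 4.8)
The plan is to combine a purely combinatorial observation with the explicit formula of Theorem~\ref{thm:fvectorExplicit} specialised to $d=2$. First I would record that, for $n\geq 3$, the typical cell $\sV_{n+1,2}$ is a non-degenerate spherical polygon (indeed $n+1\geq d+2$, so the footnote applies), and a polygon has exactly as many vertices as edges. Hence the deterministic identity $f_0(\sV_{n+1,2})=f_1(\sV_{n+1,2})$ holds almost surely, and in particular $\E f_0(\sV_{n+1,2})=\E f_1(\sV_{n+1,2})$. It therefore suffices to compute the common value, which I would obtain from \eqref{eq:f_vect_main1} by taking $d=2$ and $\ell=1$, so that $f_{d-\ell}=f_1$.

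Next I would reduce the formula to two one-dimensional integrals. For $d=2$ one has $\tilde F_2(z)=\int_{-\pi/2}^z\cos y\,\dint y=1+\sin z$, the prefactor is $\Gamma(\tfrac32)/(\sqrt\pi\,\Gamma(1))=\tfrac12$, and the congruence $m\equiv 2\Mod 2$ together with $m\in\{1,2\}$ forces the single surviving term $m=2$, for which $md-1=3$. Thus \eqref{eq:f_vect_main1} collapses to
\[
\E f_1(\sV_{n+1,2})=\frac1\pi\Big(\tfrac12\Big)^{n-1}\,\tilde I_2(n,2)\cdot 3\cdot \tilde J_2(2,1).
\]
Here $\tilde I_2(n,2)=\binom n2\int_{-\pi/2}^{\pi/2}(\cos x)^3(1+\sin x)^{n-2}\,\dint x$ and $\tilde J_2(2,1)=2\int_{-\infty}^{+\infty}(\cosh y)^{-3}\,(1+\ii\sinh y)\,\dint y$, using $\sin(\ii y)=\ii\sinh y$.

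Then I would evaluate these integrals, both elementary. In $\tilde I_2(n,2)$ the substitution $u=\sin x$ gives $(\cos x)^3\,\dint x=(1-u^2)\,\dint u$, so the integral becomes $\int_{-1}^{1}(1-u)(1+u)^{n-1}\,\dint u=2^{n+1}/(n(n+1))$, whence $\tilde I_2(n,2)=\binom n2\cdot 2^{n+1}/(n(n+1))=(n-1)2^n/(n+1)$. In $\tilde J_2(2,1)$ the term $\ii\sinh y\,(\cosh y)^{-3}$ is odd and integrates to zero (this is exactly the reality noted in the remark following Theorem~\ref{thm:fvectorExplicit}), leaving $\tilde J_2(2,1)=2\int_{-\infty}^{+\infty}(\cosh y)^{-3}\,\dint y=\pi$, where I invoke the standard value $\int_{-\infty}^{+\infty}(\cosh y)^{-s}\,\dint y=\sqrt\pi\,\Gamma(\tfrac s2)/\Gamma(\tfrac{s+1}{2})$ at $s=3$, equal to $\pi/2$.

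Finally, assembling the pieces,
\[
\E f_1(\sV_{n+1,2})=\frac1\pi\Big(\tfrac12\Big)^{n-1}\cdot\frac{(n-1)2^n}{n+1}\cdot 3\cdot\pi=\frac{6(n-1)}{n+1},
\]
and the polygon identity from the first step transfers this to $\E f_0(\sV_{n+1,2})$, giving \eqref{eq:f_vect_d_2_miles}. The computation is routine; the only mildly delicate points are the clean cancellation of the imaginary part of $\tilde J_2(2,1)$ and the evaluation of the hyperbolic integral $\int_{-\infty}^{+\infty}(\cosh y)^{-3}\,\dint y=\pi/2$, so I would state the Beta-function identity explicitly to keep that step transparent.
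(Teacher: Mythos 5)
Your proposal is correct and follows essentially the same route as the paper: specialising Theorem~\ref{thm:fvectorExplicit} to $d=2$, $\ell=1$, using $\tilde F_2(z)=1+\sin z$ to get $\tilde J_2(2,1)=\pi$, and evaluating $\tilde I_2(n,2)$ (you use the substitution $u=\sin x$ where the paper expands $(1+\sin x)^{n-2}$ binomially, but both yield $2^{n+1}/(n(n+1))$). Your explicit justification of $f_0(\sV_{n+1,2})=f_1(\sV_{n+1,2})$ via non-degeneracy, and of the vanishing imaginary part of $\tilde J_2(2,1)$ via the hyperbolic Beta integral, only makes steps the paper leaves implicit more transparent.
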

\begin{proof}
According to Theorem \ref{thm:fvectorExplicit}  we have that $\E f_1(\sV_{n+1,2})={1\over \pi}{1\over 2^{n-1}}\tilde{I}_2(n,2)\cdot 3\cdot\tilde{J}_2(2,1)$. Moreover, $\tilde{F}_2(z)=1+\sin z$, which implies that $\tilde{J}_2(2,1)=\pi$. In addition,
\begin{align*}
{n\choose 2}^{-1}\tilde{I}_2(n,2)
&= \int_{-\pi/2}^{\pi/2}(\cos x)^3(1+\sin x)^{n-2}\,\dint x
=
\sum_{k=0}^{n-2}    {\binom{n-2}{k}}      \int_{-\pi/2}^{\pi/2}(\cos x)^3(\sin x)^k\,\dint x\\
&=
\sum_{k=0}^{n-2}{\binom{n-2}{k}}   {2(1+(-1)^k)\over (k+1)(k+3)} = {2^{n+1}\over n(n+1)},
\end{align*}
which implies $\E f_1(\sV_{n+1,2})={3\over 2^{n-1}}{n\choose 2}{2^{n+1}\over n(n+1)}=6\cdot{n-1\over n+1}$.
\end{proof}

{As observed already by Miles~\cite{MilesSphere}}, it is not surprising that $\E f_0(\sV_{n+1,2})\to 6$, as $n\to\infty$, which is the expected number of vertices of the typical cell of a Poisson-Voronoi tessellation in the plane, see \cite[Theorem 10.2.5]{SW}.

\begin{remark}
We note that~\eqref{eq:f_vect_d_2_miles} can alternatively be obtained by purely combinatorial means. Indeed, by the Euler relation and the fact that the Voronoi tessellation on the sphere is almost surely simple (which, for $d=2$, means that each vertex of the tessellation belongs to exactly $3$ edges), we have
$$
 f_2(\mfm_{n+1,d}) - f_1(\mfm_{n+1,d}) + f_0(\mfm_{n+1,d}) = 2 \;\;\text{ and }\;\; 2 f_1(\mfm_{n+1,d}) = 3 f_0(\mfm_{n+1,d})\;\;\; \text{almost surely}.
$$
Also, $f_2(\mfm_{n+1,d}) = n+1$ since each cell corresponds to its centre. Altogether, it follows that
$$
f_0(\mfm_{n+1,d}) = 2(n-1)\;\; \text{ and }\;\; f_1(\mfm_{n+1,d}) = 3(n-1) \;\; \text{almost surely}.
$$
Taking the expectations and recalling Proposition~\ref{prop:total_cells} yields~\eqref{eq:f_vect_d_2_miles}.
\end{remark}

\begin{table}
\begin{tabular}{|c||c|c|c|}
\hline
\parbox[0pt][2em][c]{0cm}{} & $n=4$ & $n=5$& $n=6$\\
 \hline
\parbox[0pt][2em][c]{0cm}{}$\E f_0(\sV_{n+1,3})$ & $4$ & ${20\over 3}-{3\,289\over 360\pi^2}\approx 5.74$ & $10-{3\,289\over 120\pi^2}\approx 7.22$ \\
\hline
\parbox[0pt][2em][c]{0cm}{}$\E f_0(\sV_{n+1,4})$ & -- & $5$ & ${18975\over 2261}\approx 8.39$\\
\hline
\hline
\parbox[0pt][2em][c]{0cm}{}& $n=7$& $n=8$& $n=9$\\
\hline
\parbox[0pt][2em][c]{0cm}{}$\E f_0(\sV_{n+1,3})$ & {\tiny $14-{23\,023\over 360\,\pi^2}+{569\,556\,559\over 6\,048\,000\,\pi^4}$} & {\tiny ${56\over 3}-{23\,023\over 180\,\pi^2}+{569\,556\,559\over 1\,512\,000\,\pi^4}$} & {\tiny $24-{23\,023\over 100\,\pi^2}+{569\,556\,559\over 504\,000\,\pi^4}-{200\,082\,581\,646\,233\over 118\,540\,800\,000\,\pi^6}$}\\
& $\;\;\;\;\;\,\approx 8.49$ & $\;\;\;\;\;\;\;\;\,\approx 9.57$ & $\;\;\;\;\;\;\;\,\approx 10.52$\\
\hline
\parbox[0pt][2em][c]{0cm}{}$\E f_0(\sV_{n+1,4})$ & ${3835\over 323}\approx 11.87$ & ${340886\over 22287}\approx 15.29$ & ${8124\over 437}\approx 18.59$ \\
\hline
\end{tabular}
\caption{Exact and approximate values for the expected number of vertices of  {the} typical Voronoi cell generated by $n\in\{4,\ldots,9\}$ random points on $\SS^3$ and $n\in\{5,\ldots,9\}$ random points on $\SS^4$.}
\label{tab:d=3}
\end{table}

On the other hand, in dimensions $d>2$ the $f$-vector of $\mfm_{n+1,d}$ is not deterministic.  For $d=3$ and $d=4$, we present exact formulae for the expected $f$-vector of the typical spherical Voronoi cell and refer to Table~\ref{tab:d=3} for some exact and numerical values for small values of $n$.

\begin{corollary}
For $d=3$ and all $n\geq 4$ we have
\begin{align*}
\E f_0(\sV_{n+1,3})
&=
{256\over 35\pi}\Big({1\over 2\pi}\Big)^{n-3}\binom n3  \int_{-\pi/2}^{+\pi/2}  (\cos x)^8 (2 x+\sin (2 x)+\pi )^{n-3} \, \dint x,\\
\E f_1(\sV_{n+1,3})
&=
\frac 32\, \E f_0(\sV_{n+1,3}),\\
\E f_2(\sV_{n+1,3})
&=
\frac 12\, \E f_0(\sV_{n+1,3}) + 2.
\end{align*}
\end{corollary}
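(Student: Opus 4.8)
The plan is to read off $\E f_0(\sV_{n+1,3})$ directly from Theorem~\ref{thm:fvectorExplicit}, and then to obtain $\E f_1$ and $\E f_2$ from the almost sure combinatorics of the cell, avoiding the (messier) two-term integral formula for $\ell\in\{1,2\}$. First I would specialise to $d=3$ and record the auxiliary function $\tilde F_3(z)=F_2(z)=\int_{-\pi/2}^z(\cos y)^2\,\dint y=\tfrac14\bigl(2z+\sin(2z)+\pi\bigr)$, which is precisely what produces the polynomial $2x+\sin(2x)+\pi$ in the statement. To get $\E f_0$ I take $\ell=3=d$ in~\eqref{eq:f_vect_main1}; the congruence $m\equiv d\equiv 1 \pmod 2$ together with $m\in\{3,\dots,3\}$ leaves only the single summand $m=3$, for which $md-1=8$.

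The three ingredients of that summand are then evaluated in turn. The prefactor is $\bigl(\Gamma(2)/(\sqrt\pi\,\Gamma(3/2))\bigr)^{n-3}=(2/\pi)^{n-3}$. The factor $\tilde J_3(3,3)=\int_{-\infty}^{\infty}(\cosh y)^{-8}\,\dint y$ is a standard sech-integral; substituting $t=\tanh y$ turns it into a Beta integral equal to $\sqrt\pi\,\Gamma(4)/\Gamma(9/2)=32/35$. Finally, writing $\tilde F_3(x)^{n-3}=4^{-(n-3)}(2x+\sin(2x)+\pi)^{n-3}$ inside $\tilde I_3(n,3)$ pulls out the factor $4^{-(n-3)}$. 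Collecting constants via $(2/\pi)^{n-3}4^{-(n-3)}=(2\pi)^{-(n-3)}$ and $8\cdot(32/35)=256/35$ reproduces the claimed formula for $\E f_0(\sV_{n+1,3})$.

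For the other two coordinates I would use that $\sV_{n+1,3}$ is almost surely a \emph{simple} three-dimensional (spherical) polytope, so that exactly three edges meet at each vertex. Counting edge–vertex incidences gives $2f_1=3f_0$ almost surely, whence $\E f_1=\tfrac32\,\E f_0$. Euler's relation $f_0-f_1+f_2=2$, valid almost surely for the non-degenerate cell whose boundary is a topological $2$-sphere, then yields $f_2=2-f_0+f_1=2+\tfrac12 f_0$ pointwise; taking expectations gives $\E f_2=\tfrac12\,\E f_0+2$.

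The genuinely delicate point is not any single integral but the justification that the cell is almost surely simple and non-degenerate for \emph{every} $n\geq 4$, including the extreme case $n=4$ where it collapses to a spherical tetrahedron with $f$-vector $(4,6,4)$; one must know this before applying the incidence and Euler identities termwise. As a cross-check on the combinatorial shortcut, one can verify directly from~\eqref{eq:f_vect_main1} with $\ell=1$ that its $m=1$ summand equals the constant $2$ — here $\tilde I_3(n,1)=(\pi/2)^n$ (since $\tilde F_3'=(\cos x)^2$ telescopes the integral) and $\tilde J_3(1,1)=\int_{-\infty}^{\infty}(\cosh y)^{-2}\,\dint y=2$ — while its $m=3$ summand must reproduce $\tfrac12\,\E f_0$.
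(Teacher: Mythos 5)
Your proposal is correct and takes essentially the same route as the paper's own proof: you read $\E f_0(\sV_{n+1,3})$ off Theorem~\ref{thm:fvectorExplicit} (formula~\eqref{eq:f_vect_main1}) with $d=3$, $\ell=3$, where only $m=3$ survives, using $\tilde F_3(z)=\tfrac14\bigl(2z+\sin(2z)+\pi\bigr)$, $\tilde J_3(3,3)=\tfrac{32}{35}$ and the collapse of constants to $\tfrac{256}{35\pi}(2\pi)^{-(n-3)}$, exactly as the paper does, and then you derive $\E f_1=\tfrac32\,\E f_0$ from almost sure simplicity and $\E f_2=\tfrac12\,\E f_0+2$ from Euler's relation, again as in the paper. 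All your evaluations check out, and your $\ell=1$ cross-check (the $m=1$ summand equalling the constant $2$) is a correct consistency verification not present in, but fully compatible with, the paper's argument.
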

\begin{proof}
The first formula follows from Theorem~\ref{thm:fvectorExplicit} with $d=3$ and $\ell=3$:
$$
\E f_0(\sV_{n+1,3}) = {1\over\pi}\Big({2\over\pi}\Big)^{n-3}\tilde{I}_3(n,3)\cdot 8 \cdot\tilde{J}_3(3,3).
$$
It remains to note that $\tilde{F}_3(z)={1\over 4}(2z+\sin(2z)+\pi)$, which implies that $\tilde{J}_3(3,3)={32\over 35}$ and
$$
\tilde{I}_3(n,3) = \Big({1\over 4}\Big)^{n-3}{n\choose 3}\int_{-\pi/2}^{+\pi/2}(\cos x)^8(2x+\sin(2x)+\pi)^{n-3}\,\dint x.
$$
Since $\sV_{n+1,3}$ is a simple polytope with probability one, we have that almost surely $2f_1(\sV_{n+1,3})=3f_0(\sV_{n+1,3})$. Finally, the formula for $\E f_0(\sV_{n+1,3})$ follows from Euler's relation, which says that almost surely $f_0(\sV_{n+1,3})-f_1(\sV_{n+1,3})+f_2(\sV_{n+1,3})=2$.
\end{proof}

\begin{corollary}
For $d=4$ and all $n\geq 5$ we have
\begin{align*}
\E f_0(\sV_{n+1,4})
&=
\frac{6435}{2048}\Big({3\over 48}\Big)^{n-4} \binom n 4 \int_{-\pi/2}^{+\pi/2}  (\cos x)^{15} (8+ 9\sin x+\sin (3 x))^{n-4} \, \dint x,\\
\E f_1(\sV_{n+1,4})
&=2\E f_0(\sV_{n+1,4}),\\
\E f_2(\sV_{n+1,4})
&=6\,{n-1\over n+1}+{6\over 5}\,\E f_0(\sV_{n+1,4}),\\
\E f_3(\sV_{n+1,4})
&=6\,{n-1\over n+1}+{1\over 5}\,\E f_0(\sV_{n+1,4}).
\end{align*}
\end{corollary}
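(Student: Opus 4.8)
The plan is to compute $\E f_0(\sV_{n+1,4})$ directly from Theorem~\ref{thm:fvectorExplicit} and then to obtain the three remaining face numbers from $\E f_0$ by Euler-type relations. For the vertex count I would apply \eqref{eq:f_vect_main1} with $d=4$ and $\ell = 4$; since the index $m$ must lie in $\{4,\dots,4\}$ and satisfy $m\equiv 4\Mod 2$, only $m=4$ survives and
\[
\E f_0(\sV_{n+1,4}) = \frac1\pi\Big(\tfrac34\Big)^{n-4}\,\tilde I_4(n,4)\cdot 15\cdot \tilde J_4(4,4),
\]
using $\Gamma(\tfrac52)/(\sqrt\pi\,\Gamma(2)) = \tfrac34$ and $md-1=15$. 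The remaining ingredients are elementary: $\tilde F_4(z)=\int_{-\pi/2}^z(\cos y)^3\,\dint y = \tfrac23+\sin z-\tfrac13\sin^3 z$, and the identity $\sin 3z = 3\sin z - 4\sin^3 z$ rewrites this as $\tilde F_4(z)=\tfrac{1}{12}(8+9\sin z+\sin 3z)$, which is exactly the integrand of the claim (and supplies the factor $(1/12)^{n-4}$ from $\tilde I_4(n,4)$). Since $m-\ell=0$, the quantity $\tilde J_4(4,4)$ reduces to $\int_{-\infty}^{\infty}(\cosh y)^{-15}\,\dint y = \sqrt\pi\,\Gamma(\tfrac{15}{2})/\Gamma(8) = \tfrac{429\pi}{2048}$. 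Collecting constants gives $\tfrac1\pi\cdot 15\cdot\tfrac{429\pi}{2048}=\tfrac{6435}{2048}$ and $(\tfrac34)^{n-4}(\tfrac1{12})^{n-4}=(\tfrac{3}{48})^{n-4}$, which is the asserted formula for $\E f_0$.

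For the edge count I would use that $\sV_{n+1,4}$ is almost surely a simple $4$-polytope, so every vertex is incident to exactly $4$ edges; double counting vertex–edge incidences gives $2f_1=4f_0$ almost surely, hence $\E f_1 = 2\E f_0$.

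The subtle point — where $d=4$ genuinely differs from $d=3$ — is that the intrinsic relations of the cell do \emph{not} pin down $\E f_2$ and $\E f_3$ individually. Indeed, the only affine relations satisfied by the $f$-vector of a simple $4$-polytope are $f_1=2f_0$ together with the Euler relation $f_0-f_1+f_2-f_3=0$ (the boundary being a $3$-sphere); the latter only yields $\E f_2-\E f_3=\E f_0$, leaving one degree of freedom even after $\E f_0$ is known. To close the system I would invoke the \emph{global} Euler relation of the tessellation: $\mfm_{n+1,4}$ is a polytopal decomposition of $\SS^4$, so $\sum_{k=0}^4(-1)^k f_k(\mfm_{n+1,4}) = \chi(\SS^4)=2$ almost surely, where $f_4(\mfm_{n+1,4})=n+1$ is the deterministic number of cells. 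Taking expectations and substituting $\E f_k(\mfm_{n+1,4}) = \tfrac{n+1}{5-k}\,\E f_k(\sV_{n+1,4})$ from Proposition~\ref{prop:total_cells} turns this into one more linear equation relating $\E f_0,\E f_2,\E f_3$; solving it together with $\E f_1=2\E f_0$ and $\E f_2-\E f_3=\E f_0$ produces $\E f_3 = 6\tfrac{n-1}{n+1}+\tfrac15\E f_0$ and $\E f_2 = 6\tfrac{n-1}{n+1}+\tfrac65\E f_0$. The term $6\tfrac{n-1}{n+1}$ arises precisely from the constant $\chi(\SS^4)=2$ and the cell count $n+1$ through this global relation, not from the cell geometry.

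I expect the main obstacle to be exactly this closure step rather than the integral evaluation: one must recognise that the cell-level combinatorics is underdetermined and supplement it with the global Euler identity on $\SS^4$ (via Proposition~\ref{prop:total_cells}). An alternative to the two-Euler argument would be to use the Dehn–Sommerville relations for the dual simplicial $5$-polytope $P_{n+1,5}^{-1}=\mathrm{conv}(X_1,\dots,X_{n+1})$ together with the duality~\eqref{eq:DualityTessPoly}; this yields the same system but requires carrying three Dehn–Sommerville relations, whereas the two Euler relations are lighter and already in the spirit of the $d=3$ case. A direct computation of $\E f_3$ from Theorem~\ref{thm:fvectorExplicit} (with $\ell=1$) is also possible but less attractive, since then $m$ ranges over $\{2,4\}$ and one must evaluate a second, genuinely complex $\tilde J_4(m,1)$-integral involving $(\tilde F_4(\ii y))^{m-\ell}$.
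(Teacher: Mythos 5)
Your proposal is correct, and its first half coincides with the paper's own computation: the paper also evaluates $\E f_0(\sV_{n+1,4})$ from \eqref{eq:f_vect_main1} with $d=\ell=4$ (only $m=4$ surviving the parity constraint), using $\tilde F_4(z)=\tfrac{1}{12}(8+9\sin z+\sin(3z))$ and $\tilde J_4(4,4)=\tfrac{429\pi}{2048}$, exactly as you do. Where you genuinely deviate is in the closure step for $\E f_1,\E f_2,\E f_3$. The paper applies the three Dehn--Sommerville relations for simplicial $5$-polytopes to the dual beta polytope $P_{n+1,5}^{-1}$, transports them through the duality \eqref{eq:DualityTessPoly} into almost-sure relations among the $f_k(\mfm_{n+1,4})$, inserts $f_4(\mfm_{n+1,4})=n+1$, solves for all $f_k(\mfm_{n+1,4})$ in terms of $f_0(\mfm_{n+1,4})$, and only then converts to the typical cell via Proposition~\ref{prop:total_cells}. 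You instead keep two relations at the cell level (simplicity, $2f_1=4f_0$, and the Euler relation $f_0-f_1+f_2-f_3=0$ of the boundary $3$-sphere) and close the system with the global Euler identity $\sum_{k=0}^4(-1)^kf_k(\mfm_{n+1,4})=2$ together with $f_4(\mfm_{n+1,4})=n+1$. The two systems are linearly equivalent: the paper's first translated relation \emph{is} your global Euler identity, its third ($5f_0(\mfm_{n+1,4})=2f_1(\mfm_{n+1,4})$) matches your cell-level simplicity after Proposition~\ref{prop:total_cells}, and only the middle Dehn--Sommerville equation is replaced by your cell-level Euler relation; I checked that your system solves to precisely the stated constants $6\tfrac{n-1}{n+1}+\tfrac{6}{5}\E f_0$ and $6\tfrac{n-1}{n+1}+\tfrac{1}{5}\E f_0$. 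What your route buys is economy and transparency: it avoids importing the $5$-dimensional Dehn--Sommerville machinery, and it makes explicit the point the paper relegates to a remark, namely that the $f$-vector of a simple $4$-polytope has two free parameters, so the cell-level relations alone leave $\E f_2,\E f_3$ underdetermined and the missing equation must come from the global datum $f_4(\mfm_{n+1,4})=n+1$ --- which the paper injects through the dual polytope's vertex count and you inject through $\chi(\SS^4)=2$.
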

\begin{proof}
The identity for $\E f_0(\sV_{n+1,4})$ follows from Theorem \ref{thm:fvectorExplicit}. In fact, taking $d=4$ and $\ell=4$ we obtain
$$
\E f_0(\sV_{n+1,4}) = {1\over\pi}\Big({3\over 4}\Big)^{n-4}\tilde{I}_4(n,4)\cdot 15\cdot\tilde{J}_4(4,4).
$$
Moreover, $\tilde{F}_4(z)={1\over 12}(8+9\sin z+\sin(3z))$, which in turn implies that $\tilde{J}_4(4,4)={429\pi\over 2048}$ and
$$
\tilde{I}_4(n,4) = \Big({1\over 12}\Big)^{n-4}{n\choose 4}\int_{-\pi/2}^{+\pi/2}(\cos x)^{15}(8+9\sin x+\sin(3x))^{n-4}\,\dint x .
$$
To derive the other identities, we use the $3$ linearly independent Dehn-Sommerville equations for simplicial $5$-dimensional polytopes \cite[Corollary 17.8]{BronstedtBook}. Applied to $P_{n+1,5}^{-1}$ they say that almost surely
\begin{align*}
2 & = f_0(P_{n+1,5}^{-1}) - f_1(P_{n+1,5}^{-1}) + f_2(P_{n+1,5}^{-1}) - f_3(P_{n+1,5}^{-1}) + f_4(P_{n+1,5}^{-1}),\\
2f_1(P_{n+1,5}^{-1}) &= 3f_2(P_{n+1,5}^{-1})-6f_3(P_{n+1,5}^{-1})+10f_4(P_{n+1,5}^{-1}),\\
5f_4(P_{n+1,5}^{-1}) &= 2f_3(P_{n+1,5}^{-1}).
\end{align*}
Using \eqref{eq:DualityTessPoly} these identities translate into the almost sure relations
\begin{align*}
2 &= f_4(\mfm_{n+1,4}) - f_3(\mfm_{n+1,4}) + f_2(\mfm_{n+1,4}) - f_1(\mfm_{n+1,4}) + f_0(\mfm_{n+1,4}),\\
2f_3(\mfm_{n+1,4}) &= 3f_2(\mfm_{n+1,4}) - 6f_1(\mfm_{n+1,4}) + 10f_0(\mfm_{n+1,4}),\\
5f_0(\mfm_{n+1,4}) &= 2f_1(\mfm_{n+1,4})
\end{align*}
for the random Voronoi tessellation $\mfm_{n+1,4}$ on $\SS^4$. In addition, we have that almost surely $f_4(\mfm_{n+1,4})=n+1$, since each cell of $\mfm_{n+1,4}$ corresponds to its centre. This implies that $f_1(\mfm_{n+1,4})$, $f_2(\mfm_{n+1,4})$ and $f_3(\mfm_{n+1,4})$ can be expressed in terms of $f_0(\mfm_{n+1,4})$ only. In fact, we have that almost surely
\begin{align*}
f_1(\mfm_{n+1,4}) &= {5\over 2}f_0(\mfm_{n+1,4}),\\
f_2(\mfm_{n+1,4}) &= 2(n-1) + 2f_0(\mfm_{n+1,4}),\\
f_3(\mfm_{n+1,4}) &= 3(n-1) + {1\over 2}f_0(\mfm_{n+1,4}).
\end{align*}
We finally apply Proposition \ref{prop:total_cells} to conclude that $5\E f_0(\mfm_{n+1,4})=(n+1)\E f_0(\sV_{n+1,4})$ and
\begin{align*}
\E f_1(\sV_{n+1,4}) &= {4\over n+1}\,\E f_1(\mfm_{n+1,4}) = {4\over n+1}\cdot{5\over 2}\cdot \E f_0(\mfm_{n+1,4}) = 2\E f_0(\sV_{n+1,4}).
\end{align*}
The identities for $\E f_2(\sV_{n+1,4})$ and $\E f_3(\sV_{n+1,4})$ follow similarly:
\begin{align*}
\E f_2(\sV_{n+1,4}) &= {3\over n+1}\big(2(n-1)+2\E f_0(\mfm_{n+1,4})\big)=6\,{n-1\over n+1}+{6\over 5}\,\E f_0(\sV_{n+1,4}),\\
\E f_3(\sV_{n+1,4}) &= {2\over n+1}\Big(3(n-1)+{1\over 2}\E f_0(\mfm_{n+1,4})\Big)=6\,{n-1\over n+1} + {1\over 5}\,\E f_0(\sV_{n+1,4}).
\end{align*}
This completes the argument.
\end{proof}


\begin{remark}
It is interesting to note that if we would apply the  Dehn-Sommerville equations directly to the typical Voronoi cell $\sV_{n+1,4}$ (which is almost surely a simple polytope), this would not yield enough relations to express all $\E f_i(\sV_{n+1,4})$ through $\E f_0(\sV_{n+1,4})$.  {It is known~\cite[\S17]{BronstedtBook} that both, in dimensions $4$ and $5$, the $f$-vectors of simplicial (and simple) polytopes depend on $2$ free parameters. Applying the Dehn-Sommerville relations to the $5$-dimensional beta polytope has the advantage that we know the number of vertices to be $n+1$, which reduces the number of free parameters to $1$.}
\end{remark}

\section{Proof of Theorem \ref{thm:fvector}}\label{sec:proof}
\subsection{Preliminaries}
Let us first introduce some notation. Recall that  $\xi_n=\{X_1,\ldots,X_n\}$ is a binomial process on $\SS^d$ induced by $n\in\NN$ independent random points $X_1,\ldots,X_n$ with the uniform distribution $\sigma_d$. For each $i\in\{1,\ldots,n\}$ we let $h_i\in[-1,1]$ be the projection of $X_i$ onto the $0$-th coordinate of $\R^{d+1}$ which is shown as the vertical direction in Figure \ref{fig:proof}.  {Also, we denote by $\theta_i\in [0,\pi]$ the angle between $e=(1, 0,\ldots,0)$ and $X_i$. Formally,
$$
h_i = \langle X_i,e\rangle = \cos \theta_i,
$$
where $\langle\,\cdot\,,\,\cdot\,\rangle$ denotes the standard scalar product in $\R^d$.}
We can then decompose $X_i$ as follows:
\begin{equation}\label{eq:X_i_rep}
X_i =  e \cos \theta_i + U_i \sin \theta_i, \qquad i\in \{1,\ldots,n\},
\end{equation}
where $U_i$ is a suitable unit vector in the $d$-dimensional hyperplane $e^{\bot} = \{x_{0} = 0\}$ which we identify with $\R^d$.
{  The next lemma, which is well known, characterizes the joint distribution of  $h_i$ and $U_i$. It is just a probabilistic restatement of the slice integration formula for spheres, see Corollary A.5 in~\cite{AxlerBook}.
The density of $h_i$ can be found in~\cite[Lemma 7.6]{KabluchkoMarynychTemesThaele} or deduced from~\cite[Lemma 4.4]{KabluchkoTemesvariThaele}. 

\begin{lemma}\label{lem:DensityG}
For each $i\in\{1,\ldots,n\}$ the random variable $h_i$ has density
\begin{equation*}
f(h) = {\Gamma({d+1\over 2})\over\sqrt{\pi}\,\Gamma({d\over 2})}\,(1-h^2)^{{d\over 2}-1},\qquad h\in[-1,1],
\end{equation*}
with respect to the Lebesgue measure on $[-1,1]$. The random variable $U_i$ is uniformly distributed on the unit sphere in $\{x_{0} = 0\}\equiv \R^d$.  Finally, $h_i$ and $U_i$ are independent.
\end{lemma}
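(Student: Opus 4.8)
The plan is to read off all three assertions from a single disintegration of the uniform measure $\sigma_d$, namely the slice integration formula for the sphere \cite[Corollary~A.5]{AxlerBook}. I would introduce the change of coordinates $\SS^d \ni x \mapsto (h,u)$, where $h = \langle x,e\rangle \in [-1,1]$ and $u = (x-he)/\|x-he\| \in \SS^{d-1}\subset e^{\bot}$, which is well defined for $x\notin\{\pm e\}$ (a $\sigma_d$-null set). In these coordinates the slice formula asserts that $\sigma_d$ factorizes as the product of a measure $f(h)\,\dint h$ on $[-1,1]$ and the normalized uniform measure on $\SS^{d-1}$. Granting this, the three claims follow simultaneously: $h_i$ has marginal density $f$; since the second factor does not depend on $h$, the vector $U_i$ is uniform on $\SS^{d-1}$ and independent of $h_i$; and by \eqref{eq:X_i_rep} this $U_i$ is precisely the one appearing in the decomposition. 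It then only remains to identify $f$ together with its normalizing constant.

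To obtain $f$ explicitly, I would use the spherical parametrization $x(\theta,u) = e\cos\theta + u\sin\theta$ with $\theta\in[0,\pi]$ and $u\in\SS^{d-1}$, under which the (unnormalized) surface element of $\SS^d$ equals $(\sin\theta)^{d-1}\,\dint\theta\,\dint\omega(u)$, where $\dint\omega$ is the surface measure on $\SS^{d-1}$; this is the usual recursion for the spherical volume element. Substituting $h=\cos\theta$, so that $\dint h = -\sin\theta\,\dint\theta$ and $\sin\theta = \sqrt{1-h^2}$, converts $(\sin\theta)^{d-1}\dint\theta$ into $(1-h^2)^{d/2-1}\,\dint h$ up to orientation. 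Hence $f(h)\propto (1-h^2)^{d/2-1}$, as claimed.

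For the constant, I would compute $\int_{-1}^{1}(1-h^2)^{d/2-1}\,\dint h = B(\tfrac12,\tfrac d2) = \Gamma(\tfrac12)\Gamma(\tfrac d2)/\Gamma(\tfrac{d+1}{2}) = \sqrt\pi\,\Gamma(\tfrac d2)/\Gamma(\tfrac{d+1}{2})$, whose reciprocal is exactly the asserted normalizing factor $\Gamma(\tfrac{d+1}{2})/(\sqrt\pi\,\Gamma(\tfrac d2))$.

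If one prefers to avoid quoting the slice formula, the uniformity and independence of $U_i$ can instead be argued by symmetry: the group of rotations of $\RR^{d+1}$ fixing $e$ acts as $O(d)$ on $e^{\bot}$, leaves $\sigma_d$ invariant, fixes $h_i$, and sends $U_i$ to $RU_i$ for $R\in O(d)$; hence the conditional law of $U_i$ given $h_i$ is $O(d)$-invariant, that is, uniform on $\SS^{d-1}$ and independent of $h_i$. The density of $h_i$ is then recovered by the co-area computation of the previous paragraph. I do not anticipate a genuine obstacle here, since the statement is classical; the only points demanding care are the correct form of the surface element $(\sin\theta)^{d-1}\dint\theta\,\dint\omega(u)$ and the Beta-function evaluation of the normalizing constant.
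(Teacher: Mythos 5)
Your proposal is correct and follows exactly the route the paper itself indicates: the paper gives no proof of Lemma~\ref{lem:DensityG}, stating only that it is a probabilistic restatement of the slice integration formula \cite[Corollary~A.5]{AxlerBook} (with the density of $h_i$ cited from \cite[Lemma 7.6]{KabluchkoMarynychTemesThaele} and \cite[Lemma 4.4]{KabluchkoTemesvariThaele}), and your argument simply unpacks that same formula, correctly deriving the surface element $(\sin\theta)^{d-1}\,\dint\theta\,\dint\omega(u)$, the substitution $h=\cos\theta$, and the Beta-integral $B(\tfrac12,\tfrac d2)=\sqrt\pi\,\Gamma(\tfrac d2)/\Gamma(\tfrac{d+1}{2})$ for the normalizing constant. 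Your supplementary rotation-invariance argument for the uniformity and independence of $U_i$ is a valid (and standard) alternative to quoting the factorization directly.
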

} 

\begin{figure}[t]
\begin{center}
\begin{tikzpicture}[scale=1.4]
\draw[very thick] (0,0) circle (2);
\draw[very thick] (-3,2) -- (3,2);
\draw (0,0) -- (0,2);
\draw (0,0) -- (1.8,0.871);
\draw[dashed] (1.8,0.871) -- (0,0.871);
\draw (0,0) -- (1.6,2.5);
\draw (0,0) -- (-1.6,-2.5);

\fill[black] (0,2)  circle(0.075) node[above] {$e$};
\fill[black] (0,0)  circle(0.075) node[left] {$0$};
\fill[black] (1.8,0.871)  circle(0.075) node[right] {$X_i$};
\fill[black] (0,0.871)  circle(0.075) node[left] {$h_i$};
\node at (0.7,2.25) {$R_i$};
\node at (-2.25,2.25) {\small $\{x_0=1\}$};
\node at (1.9,2.5) {$L_i$};
\node at (-2,-1.15) {$\SS^d$};
\node at (0.75,-0.25) {$\theta_i\over 2$};
\node at (-0.5,0.4) {$\theta_i\over 2$};
\draw[->] (0.6,-0) -- (0.25,0.25);
\draw[->] (-0.3,0.4) -- (0.15,0.4);


\end{tikzpicture}
\end{center}
\label{fig:proof}
\caption{Illustration of the construction used in the proof of Theorem \ref{thm:fvector}.}
\end{figure}
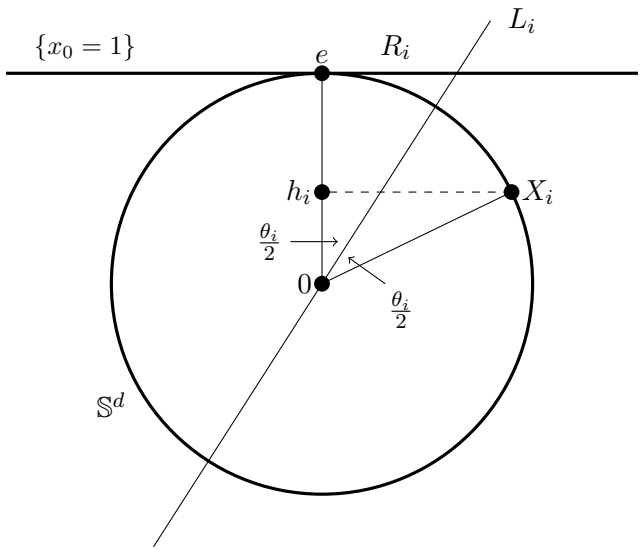

\subsection{Proof of Theorem \ref{thm:fvector}}
The starting point of our proof is the representation of the typical Voronoi cell on $\SS^d$ given in Proposition~\ref{prop:typical_voronoi_rep}:
\begin{equation*}
\sV_{n+1,d} \eqdistr \bigcap_{i=1}^n  \{z\in\SS^d: \rho(e,z)\leq \rho(X_i,z)\}.
\end{equation*}
Recalling that the geodesic distance on $\SS^d$ is given by $\rho(x,y) = \arccos \langle x,y\rangle$, $x,y\in \SS^d$, and using that the function $u\mapsto \arccos u$ is decreasing on $[-1,1]$, we can write the above representation as
\begin{equation}\label{eq:voronoi_as_intersection}
\sV_{n+1,d} \eqdistr  \bigcap_{i=1}^n (L_i^+ \cap \SS^d),
\end{equation}
where $L_1^+,\ldots,L_n^+\subset \R^{d+1}$ are half-spaces defined by
$$
L_i^+ := \{z\in\R^{d+1}: \langle e,z\rangle \geq \langle X_i,z\rangle\}
=
\{z\in\R^{d+1}: \langle X_i-e,z\rangle \leq 0 \}, \qquad i\in\{1,\ldots,n\}.
$$
The bounding hyperplane of $L_i^+$ is denoted by
$$
L_i :=
\{z\in\R^{d+1}: \langle X_i-e,z\rangle = 0 \}, \qquad i\in\{1,\ldots,n\},
$$
{  see Figure \ref{fig:proof}.} 
Note that $L_i$ passes through the origin of $\R^{d+1}$ and that $e\in L_i^+$. Consider the convex random polyhedral cone
$$
C_n := \bigcap_{i=1}^n L_i^+ \subset \R^{d+1},
$$
{  see Figure \ref{fig:proof2}.}
By definition, the $k$-dimensional faces of the spherical polytope $C_n \cap \SS^d$ (which is the right-hand side of~\eqref{eq:voronoi_as_intersection}) are in bijective correspondence with the  $(k+1)$-dimensional  faces of the polyhedral cone $C_n$. Thus, we arrive at the distributional equality
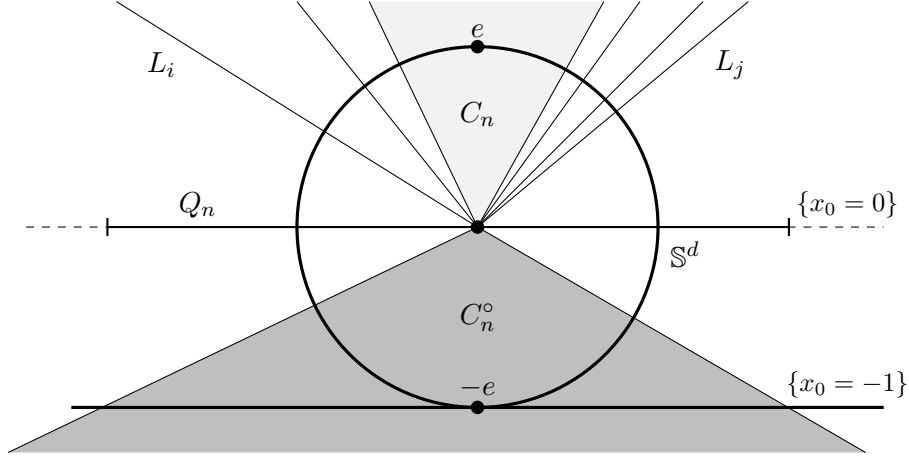
\begin{figure}[t]
\begin{center}
\begin{tikzpicture}[scale=1.2]

\fill[fill=gray!10] (0+8,0)--(1.4+8,2.5)--(-1.2+8,2.5);
\fill[fill=gray!50] (0+8,0)--(-5.2+8,-2.5)--(4.3+8,-2.5);
\draw[very thick] (0+8,0) circle (2);
\draw[very thick] (-4.5+8,-2) -- (4.5+8,-2);
\draw (0+8,0) -- (1.4+8,2.5);
\draw (0+8,0) -- (1.8+8,2.5);
\draw (0+8,0) -- (3+8,2.5);
\draw (0+8,0) -- (-1.2+8,2.5);
\draw (0+8,0) -- (-2+8,2.5);
\draw (0+8,0) -- (-4+8,2.5);
\draw (0+8,0) -- (+2.5+8,+2.5);
\draw (0+8,0) -- (4.3+8,-2.5);
\draw (0+8,0) -- (-5.2+8,-2.5);
\draw[thick] (-4.1+8,0) -- (3.45+8,0);
\draw[dashed] (-5+8,0) -- (4.5+8,0);
\draw[thick] (-4.1+8,-0.1) -- (-4.1+8,0.1);
\draw[thick] (3.45+8,-0.1) -- (3.45+8,0.1);

\node at (+2.3+8,-0.3) {$\SS^d$};
\fill[black] (0+8,2)  circle(0.075) node[above] {$e$};
\fill[black] (0+8,-2)  circle(0.075) node[above] {$-e$};
\node at (0+8,1.25)  {$C_n$};
\node at (0+8,-1)  {$C_n^\circ$};
\fill[black] (0+8,0.0)  circle(0.075);
\node at (4.1+8,-1.75) {\small $\{x_0=-1\}$};
\node at (4.1+8,0.25) {\small $\{x_0=0\}$};
\node at (-3.1+8,0.25) { $Q_n$};
\node at (2.8+8,1.8) {$L_j$};
\node at (-3.5+8,1.8) {$L_i$};
\end{tikzpicture}
\end{center}
\label{fig:proof2}
\caption{Illustration of the cones $C_n$ and $C_n^\circ$ as well as the random polytope $Q_n$.}
\end{figure}
\begin{equation}\label{eq:f_vect_proof_rep}
\big(f_k(\sV_{n+1,d})\big)_{k=0}^{d-1}
\eqdistr
\big(f_{k}(C_n \cap \SS^d)\big)_{k=0}^{d-1}
=
\big(f_{k+1}(C_n)\big)_{k=0}^{d-1}.
\end{equation}
The \textit{dual} or \textit{polar} of the convex cone $C_n$ is defined as
$$
C_n^\circ := \{x\in\RR^{d+1}:\langle x,y\rangle\leq 0\text{ for all }y\in C_n\}.
$$
Since the $(k+1)$-dimensional faces of $C_n$ are in bijective correspondence with the $(d-k)$-dimensional faces of $C_n^\circ$, it follows from~\eqref{eq:f_vect_proof_rep} that
\begin{equation}\label{eq:f_vect_proof_rep1}
\big(f_k(\sV_{n+1,d})\big)_{k=0}^{d-1}  \eqdistr  \big(f_{d-k}(C_n^\circ)\big)_{k=0}^{d-1}.
\end{equation}
Since $C_n$ is defined as the intersection of the half-spaces $L_1^+,\ldots,L_n^+$, the dual cone is the positive hull of the outward normal vectors of these half-spaces, that is
$$
C_n^\circ = \pos (X_1-e,\ldots,X_n-e).
$$
{  Since the $0$-th coordinates of the vectors $X_i-e$ are strictly negative almost surely, it follows that $C_n^\circ\backslash\{0\}$ is contained  $\{x_0 < 0\}$ almost surely.}
Recall from \eqref{eq:X_i_rep} that $X_i - e = e (\cos \theta_i-1) + U_i \sin \theta_i$. We may ignore the case when some $\theta_i=0$ because it has probability $0$.
Normalizing the vectors spanning $C_n^\circ$ in such a way that their $0$-th coordinate becomes $-1$, we get
$$
C_n^\circ = \pos \left(-e + \frac{U_1}{R_1},\ldots,-e + \frac{U_n}{R_n}\right),
$$
where
$$
R_i := \frac{1 - \cos \theta_i}{\sin \theta_i} = \tan\Big({\theta_i\over 2}\Big), \qquad i\in \{1,\ldots,n\}.
$$
{  It follows from $C_n^\circ  \backslash\{0\}\subset \{x_0 <  0\}$ that} almost surely  the $(d-k)$-dimensional faces of $C_n^{\circ}$ are in one-to-one correspondence with the $(d-k-1)$-dimensional faces of the polytope obtained by intersecting $C_n^\circ$ with the tangent space to $\SS^d$ at its south pole $-e$. Define the polytope
\begin{equation}\label{eq:Q_n_def}
Q_n := (C_n^\circ \cap \{x_0 = -1\}) + e = {\rm conv}\left(\frac{U_1}{R_1},\ldots, \frac{U_n}{R_n}\right)\subset \{x_0=0\}.
\end{equation}
Recalling~\eqref{eq:f_vect_proof_rep1} we can write
\begin{equation}\label{eq:f_vect_proof_rep2}
\big(f_k(\sV_{n+1,d})\big)_{k=0}^{d-1}  \eqdistr  \big(f_{d-k-1}(Q_n)\big)_{k=0}^{d-1}.
\end{equation}



To complete the proof of Theorem~\ref{thm:fvector}, it remains to verify that the random polytope $Q_{n}$ has the same distribution as the beta' polytope $\tilde P_{n,d}^d$ in $\RR^d$ with parameter $\beta=d$. 

{ 
\begin{lemma}\label{lem:DensityH}
For each $i\in\{1,\ldots,n\}$ the random variable $R_i$ has density
\begin{equation}\label{eq:DensityR}
g(r) = {2^d\,\Gamma({d+1\over 2})\over\sqrt{\pi}\,\Gamma({d\over 2})}\,{r^{d-1}\over (1+r^2)^d},\qquad r\geq 0,
\end{equation}
with respect to the Lebesgue measure on $[0,\infty)$. Also, we have  $R_i\overset{d}{=} 1/R_i$.
\end{lemma}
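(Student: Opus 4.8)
The plan is to obtain the density of $R_i$ from the density of $h_i = \cos\theta_i$ supplied by Lemma~\ref{lem:DensityG} via a one-dimensional change of variables, and then to read off the symmetry $R_i \overset{d}{=} 1/R_i$ from the evenness of that density. Everything reduces to an elementary computation, so I do not anticipate a genuine obstacle; the only point requiring care is bookkeeping of constants and exponents.

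First I would record the algebraic relation between $h_i$ and $R_i$. By the half-angle identity, $h_i = \cos\theta_i = \frac{1-R_i^2}{1+R_i^2}$ with $R_i = \tan(\theta_i/2)$. As $\theta_i$ ranges over $[0,\pi]$, the map $r\mapsto h = \frac{1-r^2}{1+r^2}$ is a strictly decreasing bijection from $[0,\infty)$ onto $(-1,1]$, so it is a legitimate substitution. I would then compute the two ingredients needed for the transformation formula: differentiating gives $\frac{dh}{dr} = -\frac{4r}{(1+r^2)^2}$, and a short simplification gives $1-h^2 = \frac{4r^2}{(1+r^2)^2}$.

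Next I would substitute these into $g(r) = f(h)\,\bigl|\tfrac{dh}{dr}\bigr|$, with $f$ as in Lemma~\ref{lem:DensityG}. The factor $(1-h^2)^{d/2-1}$ becomes $4^{d/2-1}\,r^{d-2}(1+r^2)^{-(d-2)}$, and multiplying by $\bigl|\tfrac{dh}{dr}\bigr| = 4r\,(1+r^2)^{-2}$ collects the powers into $r^{d-1}(1+r^2)^{-d}$, while the numerical constants combine as $4^{d/2-1}\cdot 4 = 4^{d/2} = 2^d$. This produces exactly the claimed density~\eqref{eq:DensityR}, which also integrates to $1$ since it is a probability density obtained from one by a bijective change of variables.

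Finally, for the distributional identity $R_i \overset{d}{=} 1/R_i$, I would invoke the reflection $\theta_i \mapsto \pi - \theta_i$. This map sends $h_i = \cos\theta_i \mapsto -h_i$ and $\tan(\theta_i/2) \mapsto \cot(\theta_i/2) = 1/R_i$. Since the density $f$ in Lemma~\ref{lem:DensityG} is an even function of $h$, we have $h_i \overset{d}{=} -h_i$, and therefore $R_i \overset{d}{=} 1/R_i$. Alternatively, one may verify directly from~\eqref{eq:DensityR} that $g(1/s)/s^2 = g(s)$, which is precisely the transformation rule identifying the law of $1/R_i$ with $g$; I would present the reflection argument as the primary one since it is shorter and conceptually transparent.
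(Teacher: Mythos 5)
Your proof is correct and takes essentially the same route as the paper: both obtain the density of $R_i$ from the density of $h_i$ in Lemma~\ref{lem:DensityG} through the monotone substitution $h=\frac{1-r^2}{1+r^2}$ (you apply the density transformation formula $g(r)=f(h)\,\bigl|\tfrac{dh}{dr}\bigr|$ directly, while the paper differentiates $\PP[R_i\geq r]$ --- the same computation), and both deduce $R_i\overset{d}{=}1/R_i$ from the symmetry $h_i\overset{d}{=}-h_i$. No gaps.
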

\begin{proof}
The identity $R_i = \frac {1-\cos \theta_i} {\sin \theta_i}=\frac {\sin \theta_i}{1+\cos \theta_i}$ implies that
\begin{equation}\label{eq:R_i}
R_i^2 = \frac{1-\cos\theta_i}{\sin \theta_i} \cdot \frac{\sin \theta_i}{1+\cos\theta_i}  =   \frac{1-\cos\theta_i}{1+\cos\theta_i} = \frac {1-h_i}{1+h_i}.
\end{equation}
Since $h_i$ has the same distribution as $-h_i$ by Lemma~\ref{lem:DensityG}, we have $R_i\overset{d}{=} 1/R_i$. 
Furthermore,  for each $r\geq 0$,
\begin{align*}
\PP[R_i\geq r] &= \PP\Big[\sqrt{1-h_i\over 1+h_i}\geq r\Big]= \PP\Big[h_i\leq{1-r^2\over 1+r^2}\Big]= {\Gamma({d+1\over 2})\over\sqrt{\pi}\,\Gamma({d\over 2})}\int_0^{1-r^2\over 1+r^2}(1-h^2)^{{d\over 2}-1}\,\dint h,
\end{align*}
where the last identity comes from Lemma \ref{lem:DensityG}. Differentiation with respect to $r$ thus proves that the density of $R_i$ is
\begin{align*}
g(r) &= {\Gamma({d+1\over 2})\over\sqrt{\pi}\,\Gamma({d\over 2})}\,\bigg(1-\Big({1-r^2\over 1+r^2}\Big)^2\bigg)^{{d\over 2}-1}\,{4r\over(1+r^2)^2}= {2^d\,\Gamma({d+1\over 2})\over\sqrt{\pi}\,\Gamma({d\over 2})}\,{r^{d-1}\over (1+r^2)^d},
\end{align*}
which completes the argument.
\end{proof}

We are now in position to complete the  proof of Theorem~\ref{thm:fvector}.
Recall from Lemma~\ref{lem:DensityG} that $U_1,\ldots,U_n$ are i.i.d.\ and uniformly distributed on the unit sphere in $\R^d$. The same Lemma~\ref{lem:DensityG} (see also~\eqref{eq:R_i}) states that this family is independent of the collection $R_1,\ldots,R_n$  of random variables which are also i.i.d.\ and have density $g(r)$ given by \eqref{eq:DensityR}.
Altogether, recalling~\eqref{eq:Q_n_def}, it follows that
$$
Q_{n} \overset{d}{=} {\rm conv}\left(U_1 R_1,\ldots, U_nR_n\right)
$$
and that $U_1R_1,\ldots,U_nR_n$ are independent random points in $\R^d$ with Lebesgue density 
$$
\tilde f_{d, d}(x) := {\Gamma(d)\over \pi^{d/2}\Gamma(d/2)} \,
(1+\|x\|^2)^{-d},\qquad x\in\R^d.
$$ 
(The value of the constant follows from the Legendre duplication formula but is actually not needed for the argument).  This is the beta' density from~\eqref{eq:BetaPrimeDensity} with  $\beta=d$. 
Hence, $Q_{n}$ has the same distribution as the beta' polytope $\tilde P_{n,d}^d$, and the proof of Theorem \ref{thm:fvector} is complete. \hfill $\Box$

\begin{remark}
As a byproduct of the above proof, note that the beta' distribution  with $\beta=d$  stays  invariant  under inversion with respect to the unit sphere.
\end{remark}

}

\subsection*{Acknowledgement}
We would like to thank {  the unknown referee for numerous useful comments which substantially improved the presentation and to}  Maike Buchin (Bochum) for pointing us to the usage of Voronoi diagrams on manifolds in computational geometry. ZK has been supported by the German Research Foundation under Germany's Excellence Strategy  EXC 2044 -- 390685587, Mathematics M\"unster: Dynamics - Geometry - Structure.



\end{document}